\documentclass[11pt]{amsart}
\usepackage{latexsym}
\usepackage{amsmath,amssymb,amsfonts}
\usepackage{graphics}
\usepackage[all]{xy}

\def\Bbb{\mathbb}

\def\eea{\end{eqnarray*}}

\newtheorem{defn}{Definition}
\newtheorem{thm}{Theorem}[section]

\newtheorem{lem}[thm]{Lemma}

\newenvironment{rmk}{\mbox{ }\\{\bf  Remark}\mbox{ }}{
\hfill $\Box$\mbox{}\bigskip}

\textwidth=5in
\textheight=7.5in

\begin{document}
\renewcommand{\theequation}{\thesection.\arabic{equation}}

\title[$G$-monopole invariants on 4-manifolds]{$G$-monopole invariants on some connected sums of 4-manifolds}

\author{Chanyoung Sung}
\date{\today}

\address{Dept. of mathematics education \\
Korea national university of education}
\email{cysung@kias.re.kr}
\keywords{Seiberg-Witten equations, $G$-monopole invariant, group action}
\subjclass[2010]{57R57, 57M60}

\begin{abstract}
On a smooth closed oriented $4$-manifold $M$ with a smooth action
of a finite group $G$ on a  Spin$^c$ structure, $G$-monopole
invariant is defined by ``counting" $G$-invariant solutions of
Seiberg-Witten equations for any $G$-invariant Riemannian metric
on $M$.

We compute $G$-monopole invariants on some $G$-manifolds. For example, the connected sum of $k$ copies of a 4-manifold with nontrivial mod 2 Seiberg-Witten invariant has nonzero $\Bbb Z_k$-monopole invariant mod 2, where the $\Bbb Z_k$-action is given by cyclic permutations of $k$ summands.
\end{abstract}
\maketitle
\setcounter{section}{0}
\setcounter{equation}{0}

\section{Introduction}
Let $M$ be a smooth closed oriented manifold of dimension 4.
A second cohomology class of $M$ is called a \emph{monopole class}
if it arises as the first Chern class of a Spin$^c$ structure for
which the Seiberg-Witten equations
$$\left\{
\begin{array}{ll} D_A\Phi=0\\
  F_{A}^+=\Phi\otimes\Phi^*-\frac{|\Phi|^2}{2}\textrm{Id},
\end{array}\right.
$$
admit a solution for every choice of a Riemannian metric. Clearly
a basic class, i.e. the first Chern class of a Spin$^c$ structure with a nonzero Seiberg-Witten invariant is a monopole class. However, ordinary Seiberg-Witten invariants which are gotten by the intersection
theory on the moduli space of solutions $(A,\Phi)$ of the above equations is trivial in many
important cases, for example connected sums of 4-manifolds with $b_2^+>0$.

Bauer and Furuta \cite{BF, bau} made a breakthrough in detecting a
monopole class on certain connected sums of 4-manifolds. Their new
brilliant idea is to generalize the Pontryagin-Thom construction
to a proper map between infinite-dimensional spaces, which is the
sum of a linear Fredholm map and a compact map, and take some sort of a
stably-framed bordism class of the Seiberg-Witten moduli space as an
invariant. However its applications are still limited in that
this new invariant which is expressed as a stable cohomotopy class
is difficult to compute, and we are seeking after further refined
invariants of the Seiberg-Witten moduli space.

In the meantime, sometimes we need a solution of the
Seiberg-Witten equations for a specific metric rather than any
Riemannian metric. The case we have in mind is the one when a
manifold $M$ and its Spin$^c$ structure $\frak{s}$ admit a smooth
orientation-preserving action by a compact Lie group $G$ and we
are concerned with finding a solution of the Seiberg-Witten
equations for any $G$-invariant metric.

Thus for a $G$-invariant metric on $M$ and a $G$-invariant
perturbation of the Seiberg-Witten equations, we consider the
\emph{$G$-monopole moduli space} $\frak{X}$ consisting of their
$G$-invariant solutions modulo gauge equivalence. One can easily
see that the ordinary moduli space $\frak{M}$ is acted on by $G$,
and $\frak{X}$ turns out to be a subset of its $G$-fixed point
set. The intersection theory on $\frak{X}$ will give the
\emph{$G$-monopole invariant} $SW^{G}_{M,\frak{s}}$ defined first
by Y. Ruan \cite{ruan}, which encodes the information of the given
$G$-action along with $M$, and may be sometimes sharper than the
ordinary Seiberg-Witten invariant $SW_{M,\frak{s}}$. To be
precise, we need the dimension $b_2^+(M)^G$ of the maximal
dimension of subspaces of $G$-invariant 2nd cohomology classes of $M$,
where the intersection form is positive-definite to be bigger than
1. In view of this, the following definition is relevant :
\begin{defn}
Suppose that $M$ admits a smooth action by a compact Lie group $G$
preserving the orientation of $M$.

A second cohomology class of $M$ is called a $G$-\emph{monopole
class} if it arises as the first Chern class of a $G$-equivariant
Spin$^c$ structure for which the Seiberg-Witten equations admit a
$G$-invariant solution for every $G$-invariant Riemannian metric of $M$.
\end{defn}
When the $G$-monopole invariant is nonzero, its first Chern class
has to be a $G$-monopole class. As explain in \cite{sung3}, the
cases we are aiming at are those for finite $G$.  If a compact
connected Lie group $G$  has positive dimension and is not a torus
$T^n$, then $G$ contains a Lie subgroup isomorphic to $S^3$ or
$S^3/\Bbb Z_2$, and hence $M$ admitting an effective action of
such $G$ must have a $G$-invariant metric of positive scalar
curvature by the well-known Lawson-Yau theorem \cite{law-yau}.
Therefore when $b_2^+(M)^G>1$, $M$ has no $G$-monopole class for
such $G$. On the other hand, the Seiberg-Witten invariants of a
4-manifold with an effective $S^1$ action were extensively studied
by S. Baldridge \cite{bal1, bal2, bal3}.

Using $G$-monopole invariants, we find $G$-monopole classes in some
connected sums which have vanishing Seiberg-Witten invariants :
\begin{thm}\label{firstth}
Let $M$ and $N$ be  smooth closed oriented connected 4-manifolds satisfying
$b_2^+(M)> 1$ and $b_2^+(N)=0$, and $\bar{M}_k$ for any $k\geq 2$ be the connected sum
$M\#\cdots \#M\# N$ where there are $k$ summands of $M$.

Suppose that a finite group $G$ with $|G|=k$ acts effectively on $N$ in a smooth
orientation-preserving way such that it is free or has at least one fixed point, and that $N$ admits a Riemannian metric of positive scalar curvature invariant under the $G$-action and a $G$-equivariant Spin$^c$ structure $\frak{s}_N$ with $c_1^2(\frak{s}_N)=-b_2(N)$.

Define a $G$-action on $\bar{M}_k$ induced from that of $N$
permuting $k$ summands of $M$ glued along
a free orbit in $N$, and let $\bar{\frak{s}}$ be the
Spin$^c$ structure on $\bar{M}_k$ obtained by gluing $\frak{s}_N$ and a Spin$^c$
structure $\frak{s}$ of $M$.

Then for any $G$-action on $\bar{\frak{s}}$ covering the above
$G$-action on $\bar{M}_k$, $SW^{G}_{\bar{M}_k,\bar{\frak{s}}}$ mod 2 is nontrivial if
$SW_{M,\frak{s}}$ mod 2 is nontrivial.
\end{thm}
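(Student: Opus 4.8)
The plan is to show that a $G$-invariant solution persists for every $G$-invariant metric by combining a gluing construction with a mod 2 count. First I would set up the linear model: since $N$ carries a $G$-invariant positive-scalar-curvature metric and a $G$-equivariant Spin$^c$ structure $\frak{s}_N$ with $c_1^2(\frak{s}_N)=-b_2(N)$, the Weitzenb\"ock formula forces the only $G$-invariant Seiberg-Witten solutions on $N$ to be reducible, and the condition on $c_1^2$ together with $b_2^+(N)=0$ pins down the expected dimension so that the reducible locus contributes in a controlled way. The idea is that $N$ serves as the ``neck'' region that is topologically nontrivial but analytically rigid, so all the interesting monopole data lives on the $k$ copies of $M$.

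Next I would invoke a gluing theorem adapted to the $G$-symmetry. The $G$-action permutes the $k$ copies of $M$ cyclically (glued along a free orbit), so a $G$-invariant configuration on $\bar M_k$ is determined by a single configuration on one copy of $M$ together with its $G$-translates; equivalently, the $G$-invariant moduli space $\frak X$ on $\bar M_k$ should be identified, via the standard stretching-the-neck degeneration, with the moduli space $\frak M_{M,\frak s}$ on a single $M$. Under this identification the $G$-invariant gluing parameters collapse (the freeness of the orbit guarantees $G$ acts freely on the gluing necks, so there is a single free gluing parameter rather than $k$ independent ones), and I expect the $G$-equivariant count $SW^G_{\bar M_k,\bar{\frak s}}\bmod 2$ to reduce precisely to $SW_{M,\frak s}\bmod 2$. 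The mod 2 reduction is essential here, because it lets me avoid orientation and sign bookkeeping for the $G$-equivariant moduli space and instead count the parity of $G$-fixed points directly.

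The main technical step, and the one I expect to be the principal obstacle, is the $G$-equivariant gluing analysis: one must prove that every $G$-invariant solution on $\bar M_k$ arises from this gluing construction (surjectivity of gluing) and that the construction is a diffeomorphism onto the $G$-invariant moduli space near the degenerate limit (injectivity and transversality in the $G$-invariant category). This requires showing that the relevant deformation operator, restricted to its $G$-invariant part, is still surjective after a $G$-invariant perturbation — i.e. that $G$-invariant transversality holds — and that the error terms in the approximate-solution-plus-contraction-mapping scheme can be made $G$-equivariantly small uniformly as the neck is stretched. The subtlety is that one cannot perturb arbitrarily but only within $G$-invariant perturbations, so the usual Sard-Smale argument must be carried out equivariantly; the positive scalar curvature on $N$ is what makes this feasible, since it eliminates spurious solutions concentrated on the $N$ part and thereby confines the gluing region to the symmetric necks.

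Finally, having identified $\frak X$ with $\frak M_{M,\frak s}$ as a $G$-fixed point computation, I would read off the invariant: the parity of the $G$-invariant solution count equals the parity of the ordinary solution count on $M$, so $SW^G_{\bar M_k,\bar{\frak s}}\equiv SW_{M,\frak s}\pmod 2$, which is nontrivial by hypothesis. A point to handle carefully is the case distinction between a free $G$-action and one with a fixed point on $N$, since the fixed-point case changes the local structure of $\frak{s}_N$ near the fixed locus and may require a separate verification that the reducible contribution on $N$ does not interfere with the count; I would treat this by checking that the $G$-equivariant index contribution at the fixed point is even, so that it drops out mod 2.
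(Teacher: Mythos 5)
You have the right global strategy (stretch the neck, show $N$ contributes only reducibles, identify the invariant moduli space with data on one copy of $M$, count mod $2$), but two essential points are missing, and one of them would make your claimed identity false as stated. First, the $N$ side is not ``analytically rigid'': on the cylindrical-end manifold $\hat{N}$ the reducible solutions form the torus $H^1_{cpt}(\hat{N};\mathbb{R})/H^1_{cpt}(\hat{N};\mathbb{Z})\simeq T^{b_1(N)}$ of $L^2$-harmonic $1$-forms modulo gauge, and its $G$-invariant part is a torus $T^{\nu}$ with $\nu=\dim H_1(N;\mathbb{R})^{G}$, which need not be a point. The paper proves $\mathfrak{X}_{\bar{M}_k,\bar{\mathfrak{s}}}\simeq \mathfrak{M}_{M,\mathfrak{s}}\times T^{\nu}$, so when $\nu>0$ your equality $SW^{G}_{\bar{M}_k,\bar{\mathfrak{s}}}(U^dA)\equiv SW_{M,\mathfrak{s}}(U^dA)$ fails for dimension reasons (the left side vanishes); the correct statement inserts the classes $b_1,\dots,b_\nu\in H_1(N;\mathbb{Z})$ spanning $H_1(N;\mathbb{R})^G$, namely $SW^{G}_{\bar{M}_k,\bar{\mathfrak{s}}}(U^dA\wedge b_1\wedge\cdots\wedge b_\nu)\equiv SW_{M,\mathfrak{s}}(U^dA)$ mod $2$, after checking that the $\mu$-cocycles on $\mathfrak{M}_{M}\times T^{\nu}$ and on $\mathfrak{X}_{\bar{M}_k}$ agree and that $\mu(b_1)\wedge\cdots\wedge\mu(b_\nu)$ generates $H^\nu(T^\nu;\mathbb{Z})$. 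Nontriviality of the invariant as a function survives, but only through this evaluation.

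Second, your step ``a $G$-invariant configuration is determined by one configuration on $M$ and its translates, so the gluing parameters collapse'' conflates $\mathfrak{X}$ (invariant solutions modulo invariant gauge) with the fixed locus $\mathfrak{M}^{G}$ of the $G$-action on the ordinary moduli space; invariance in the moduli space holds only up to gauge, and this is exactly where the work lies. For $G=\mathbb{Z}_k$ the fixed locus of the approximate moduli space consists of $k$ components indexed by gluing phases $\theta=2\pi j/k$, $j=0,\dots,k-1$, each diffeomorphic to $\mathfrak{M}_{M}\times T^{\nu}$; one must rule out the $k-1$ components with $\theta\neq 0$ as elements of $\mathfrak{X}$, and the paper does this by evaluating the intertwining gauge transformation $\sigma^*(\mathfrak{g})=\mathfrak{g}e^{-i\vartheta}$ at a fixed point $p$ of the action (and, for $\theta=0$, again uses the fixed point plus a Lorentz gauge argument to produce a genuinely invariant representative). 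This is why the hypothesis ``free or has at least one fixed point'' appears: in the free case no such $p$ exists and the paper instead invokes the Ruan--Wang/Nakamura theorem to identify $\mathfrak{X}_{\bar{M}_k,\bar{\mathfrak{s}}}$ with the ordinary moduli space $\mathfrak{M}_{M\# N/G,\mathfrak{s}\#\mathfrak{s}_N'}$ of the quotient. Your proposed treatment of the dichotomy --- checking that ``the equivariant index contribution at the fixed point is even'' --- does not address either mechanism: the fixed point is not a source of extra index contributions to be cancelled, it is the tool that pins down constant gauge transformations, and the free case requires the separate descent argument, not a parity check.
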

The precise computation of  $SW^{G}_{\bar{M}_k,\bar{\frak{s}}}$ mod 2 will be given in Section 3, and we will also give some examples of such $N$ in the last section. The condition on $N$ may be generalized a bit more.

This article is a refined publish version of original results announced in the archive \cite{sung4}. In a subsequent paper \cite{sung5}, we will use $G$-monopole invariants to detect smooth exotic actions of finite groups on 4-manifolds. The existence of a $G$-monopole class also has applications to Riemannian geometry such as $G$-invariant Einstein metrics and $G$-Yamabe invariant, which are dealt with in \cite{sung3}.

\section{$G$-monopole invariant}
Let $M$ be  a  smooth closed oriented 4-manifold.  Suppose that a
compact Lie group $G$ acts on $M$ smoothly preserving the
orientation, and this action lifts to an action on a Spin$^c$
structure $\mathfrak{s}$ of $M$. Once there is a lifting, any
other lifting differs from it by an element of $Map(G\times M,
S^1)$. We fix a lifting and put a $G$-invariant Riemannian metric
$g$ on $M$. Then the associated spinor bundles $W_{\pm}$ are also
$G$-equivariant, and we let $\Gamma(W_{\pm})^G$ be the set of its
$G$-invariant sections. When we put $G$ as a superscript on a set,
we always mean the subset consisting of its $G$-invariant
elements. Thus $\mathcal{A}(W_+)^G$ is the space of $G$-invariant
connections on $\det (W_+)$, which is identified as the space of
$G$-invariant purely-imaginary valued 1-forms
$\Gamma(\Lambda^1(M;i\Bbb R))^G$, and $\mathcal{G}^G=
Map(M,S^1)^G$ is the group of $G$-invariant gauge transformations.

The perturbed Seiberg-Witten equations give a smooth map $$H:
\mathcal{A}(W_+)^G\times \Gamma(W_+)^G\times \Gamma(\Lambda^2_+(M;i\Bbb
R))^G\rightarrow \Gamma(W_-)^G\times \Gamma(\Lambda^2_+(M;i\Bbb R))^G$$ defined by
$$H(A,\Phi,\varepsilon)=(D_A\Phi,
F_A^+-\Phi\otimes\Phi^*+\frac{|\Phi|^2}{2}\textrm{Id}+\varepsilon),$$
where the domain and the range are endowed with $L_{l+1}^2$ and
$L_l^2$  Sobolev norms for a positive integer $l$ respectively,
and $D_A$ is a Spin$^c$ Dirac operator. The $G$-monopole moduli
space $\frak{X}_\varepsilon$ for a perturbation $\varepsilon$ is
then defined as
$$\frak{X}_\varepsilon:=H^{-1}_\varepsilon(0)/ \mathcal{G}^G$$ where $H_\varepsilon$ denotes
$H$ restricted to $\mathcal{A}(W_+)^G\times \Gamma(W_+)^G\times\{\varepsilon\}$.

In the followings, we give a detailed proof that $\frak{X}_\varepsilon$ for generic $\varepsilon$ and finite $G$ is a smooth compact manifold, because some statements in \cite{ruan, cho}
are incorrect or without proofs.
\begin{lem}\label{saveyou}
The quotient map $$p : (\mathcal{A}(W_+)^G\times (\Gamma(W_+)^G-\{0\}))/ \mathcal{G}^G\rightarrow (\mathcal{A}(W_+)^G\times (\Gamma(W_+)^G-\{0\}))/
\mathcal{G}$$ is bijective, and hence $\mathfrak{X}_\varepsilon$ is a subset of the
ordinary Seiberg-Witten moduli space $\frak{M}_\varepsilon$.
\end{lem}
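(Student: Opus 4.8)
The plan is to verify the three standard properties of a quotient comparison map: that $p$ is well-defined, surjective, and injective, with injectivity carrying essentially all the content. Well-definedness is immediate because $\mathcal{G}^G$ is a subgroup of $\mathcal{G}$, so any two representatives lying in the same $\mathcal{G}^G$-orbit are a fortiori in the same $\mathcal{G}$-orbit. Surjectivity is equally formal: by construction the target is the set of full gauge orbits of pairs that are already $G$-invariant, so every class in the target admits a $G$-invariant representative, which lies in the source and maps to it under $p$.

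The heart of the argument is injectivity, and here I would exploit two facts. First, on the irreducible locus $\Gamma(W_+)^G-\{0\}$ the full gauge group $\mathcal{G}$ acts freely: if $u\cdot(A,\Phi)=(A,\Phi)$ with $\Phi\neq 0$, then comparing connection components forces $du=0$, hence $u$ is a constant in $S^1$ (using that $M$ is connected), and then $u\Phi=\Phi$ with $\Phi\neq 0$ forces $u=1$. Second, the chosen lifting of the $G$-action to $\mathfrak{s}$ induces a natural action $u\mapsto \gamma_* u$ of $G$ on $\mathcal{G}$, and the $G$-action on the configuration space is equivariant for the gauge action, i.e. $\gamma\cdot(u\cdot c)=(\gamma_* u)\cdot(\gamma\cdot c)$ for every $\gamma\in G$.

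Granting these, suppose two $G$-invariant pairs $(A_1,\Phi_1)$ and $(A_2,\Phi_2)$ with $\Phi_i\neq 0$ satisfy $u\cdot(A_1,\Phi_1)=(A_2,\Phi_2)$ for some $u\in\mathcal{G}$. Applying $\gamma\in G$ and using the $G$-invariance of both pairs together with the equivariance identity, I obtain $(\gamma_* u)\cdot(A_1,\Phi_1)=(A_2,\Phi_2)=u\cdot(A_1,\Phi_1)$, whence $u^{-1}\gamma_* u$ stabilizes the irreducible pair $(A_1,\Phi_1)$. Freeness of the action on the irreducible locus then forces $\gamma_* u=u$ for all $\gamma$, i.e. $u\in\mathcal{G}^G$. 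Thus the two pairs already lie in the same $\mathcal{G}^G$-orbit, proving injectivity. The final assertion follows at once, since $p$ identifies $\frak{X}_\varepsilon=H_\varepsilon^{-1}(0)/\mathcal{G}^G$ with a subset of $\frak{M}_\varepsilon$.

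The step I expect to require the most care is the equivariance identity $\gamma\cdot(u\cdot c)=(\gamma_* u)\cdot(\gamma\cdot c)$ together with the precise definition of $\gamma_* u$: this depends on the chosen lifting of the $G$-action to $\mathfrak{s}$ and on the sign conventions in the gauge action on connections, and one must check that the condition $\gamma_* u=u$ for all $\gamma$ is genuinely equivalent to membership in $Map(M,S^1)^G$. Once freeness on the irreducible locus and this compatibility are in hand, the remainder of the argument is purely formal.
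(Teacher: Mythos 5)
Your proof is correct, but it takes a genuinely different route from the paper's. The paper argues directly on the connecting gauge transformation: writing $A_1=A_2-2d\ln\gamma$ and $\Phi_1=\gamma\Phi_2$, it deduces that $d\ln\gamma$ is $G$-invariant, notes that $\gamma$ is $G$-invariant on the nonempty set where $\Phi_1\neq 0$, and then runs an open-closed argument on the set $S\subset M$ where $\gamma$ is $G$-invariant, using local primitives of $d\ln\gamma$ (and compactness of $G$ to get a single neighborhood on which $g^*\ln\gamma$ is defined for all $g$ simultaneously); connectedness of $M$ gives $S=M$, i.e.\ $\gamma\in\mathcal{G}^G$. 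You instead factor the work through two structural facts --- freeness of $\mathcal{G}$ on the irreducible locus, and the equivariance $g^*(u\cdot c)=(g^*u)\cdot(g^*c)$ --- so that the discrepancy $u^{-1}(g^*u)$ stabilizes an irreducible configuration and is therefore trivial. The analytic core is the same in both versions (the connection equation kills the derivative of the discrepancy, the spinor equation pins its value at a point where the spinor does not vanish, and connectedness propagates), but your packaging is cleaner: the connectedness argument is applied inside the freeness lemma to the globally defined $S^1$-valued function $u^{-1}(g^*u)$, rather than to local branches of $\ln\gamma$, which sidesteps the fussiest step of the paper's proof (the uniform choice of neighborhood over all $g\in G$). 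The caution you flag about the equivariance identity resolves affirmatively, and independently of the chosen lifting of the $G$-action to $\mathfrak{s}$: gauge transformations act fiberwise by unit scalars while any lift of $g$ is fiberwise complex-linear, so conjugating $u$ by the lift is simply composition with $g$, and the fixed-point condition $g^*u=u$ for all $g\in G$ is by definition membership in $Map(M,S^1)^G$.
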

\begin{proof}
Obviously $p$ is surjective, and to show that $p$ is injective, suppose that  $(A_1,\Phi_1)$ and
$(A_2,\Phi_2)$ in $\mathcal{A}(W_+)^G\times (\Gamma(W_+)^G-\{0\})$ are
equivalent under $\gamma\in\mathcal{G}$. Then $$A_1=A_2-2d\ln \gamma, \ \ \ \textrm{and}\ \ \
\Phi_1=\gamma\Phi_2.$$ By the first equality, $d\ln \gamma$ is $G$-invariant.

Let $S$ be the subset of $M$ where $\gamma$ is $G$-invariant. By the continuity of $\gamma$, $S$ must be a closed subset. Since $S$ contains a nonempty subset $$\{x\in M| \Phi_1(x)\ne 0\},$$ $S$ is nonempty.
It suffices to show that ${S}$ is open. Let $x_0\in {S}$. Then we have that for any $g\in G$, $$g^*\ln\gamma(x_0)=\ln\gamma(x_0), \ \ \ \textrm{and}\ \ \ g^*d\ln\gamma=d\ln\gamma,$$ which implies that $g^*\ln\gamma=\ln\gamma$ on an open neighborhood of $x_0$ on which $g^*\ln\gamma$ and $\ln\gamma$ are well-defined. By the compactness of $G$, there exists an open neighborhood of $x_0$ on which $g^*\ln\gamma$ is well-defined for all $g\in G$, and $\ln \gamma$ is $G$-invariant. This proves the openness of $S$.
\end{proof}

As in the ordinary Seiberg-Witten moduli space, the transversality is
obtained by a generic perturbation $\varepsilon$ :
\begin{lem}
$H$ is a submersion at each $(A,\Phi,\varepsilon)\in H^{-1}(0)$ for nonzero
$\Phi$.
\end{lem}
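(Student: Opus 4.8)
The plan is to show that the differential $dH$ at a point $(A,\Phi,\varepsilon)\in H^{-1}(0)$ with $\Phi\neq0$ is a surjective bounded operator between the relevant $L^2_{l+1}$ and $L^2_l$ Sobolev completions; since these are Hilbert spaces, surjectivity with closed range furnishes the splitting needed for $H$ to be a submersion. Computing the linearization in a direction $(a,\phi,\eta)\in\Gamma(\Lambda^1(M;i\Bbb R))^G\times\Gamma(W_+)^G\times\Gamma(\Lambda^2_+(M;i\Bbb R))^G$ gives
\[
dH_{(A,\Phi,\varepsilon)}(a,\phi,\eta)=\Big(D_A\phi+\tfrac12\,a\cdot\Phi,\; d^+a+Q(\phi)+\eta\Big),
\]
where $a\cdot\Phi$ denotes Clifford multiplication and $Q$ is the order-zero (bundle) linearization of the quadratic spinor term. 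Because $\eta$ enters the second slot as the identity, that factor is hit freely, so surjectivity of $dH$ reduces to surjectivity onto $\Gamma(W_-)^G$ together with closed range. For closed range I would split $dH=T_0+K$ with $T_0(a,\phi,\eta)=(D_A\phi,\,d^+a+\eta)$ and $K(a,\phi,\eta)=(\tfrac12\,a\cdot\Phi,\,Q(\phi))$: the range of $T_0$ is $\mathrm{im}(D_A)\oplus\Gamma(\Lambda^2_+(M;i\Bbb R))^G$, which is closed with finite-dimensional cokernel, since the second component is already onto and $D_A$ restricted to $G$-invariant sections is Fredholm, being the invariant isotypic block of the $G$-equivariant elliptic operator $D_A$ for finite $G$; meanwhile $K$ is order zero, hence compact as a map $L^2_{l+1}\to L^2_l$ by Rellich. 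Thus $dH$ has closed range and finite cokernel, and it remains to prove the cokernel vanishes.

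Next I would take $(\psi,\omega)\in\Gamma(W_-)^G\times\Gamma(\Lambda^2_+(M;i\Bbb R))^G$ that is $L^2$-orthogonal to the image of $dH$. Varying $\eta$ forces $\omega=0$. Varying $\phi$ with $a=0$ gives $\langle D_A\phi,\psi\rangle_{L^2}=0$ for all $G$-invariant $\phi$, and varying $a$ with $\phi=0$ gives $\langle a\cdot\Phi,\psi\rangle_{L^2}=0$, i.e. $\langle a,B^*\psi\rangle_{L^2}=0$ for all $G$-invariant $a$, where $B^*$ is the fiberwise adjoint of $a\mapsto a\cdot\Phi$. This is the one genuinely equivariant point and the crux of the argument: a priori these orthogonalities only test against $G$-invariant fields, so they do not immediately produce the pointwise conditions used in the ordinary Seiberg-Witten transversality argument. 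The resolution is that, since $\psi$ and $\Phi$ are $G$-invariant and $D_A$ and Clifford multiplication are $G$-equivariant, both $D_A^*\psi$ and $B^*\psi$ are themselves $G$-invariant; hence each may be used as its own test field, yielding $\|D_A^*\psi\|_{L^2}^2=0$ and $\|B^*\psi\|_{L^2}^2=0$, so that $D_A^*\psi=0$ and $B^*\psi=0$ everywhere.

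Finally I would convert $B^*\psi=0$ into the vanishing of $\psi$. At any point $x$ with $\Phi(x)\neq0$ the real-linear fiber map $a\mapsto a\cdot\Phi(x)$ from $(\Lambda^1\otimes i\Bbb R)_x\cong\Bbb R^4$ to $(W_-)_x\cong\Bbb R^4$ is injective, because Clifford multiplication by a nonzero covector is invertible, so $a\cdot\Phi(x)=0$ forces $a=0$; being injective between equidimensional spaces it is bijective, and hence $B^*_x\psi(x)=0$ with $\Phi(x)\neq0$ forces $\psi(x)=0$. Thus $\psi$ vanishes on the open set $\{\,\Phi\neq0\,\}$. Since $(A,\Phi)$ solves the equations we have $D_A\Phi=0$, so by Aronszajn's unique continuation the zero set of $\Phi$ has empty interior and $\{\,\Phi\neq0\,\}$ is dense; as $\psi$ is continuous for $l$ large, this already gives $\psi\equiv0$, and independently $D_A^*\psi=0$ yields $\psi\equiv0$ by unique continuation on the connected $M$. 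Hence the cokernel is trivial and $dH$ is surjective. I expect the main obstacle to be precisely the equivariant step above, namely that restricting the test fields $a$ and $\phi$ to $G$-invariant ones still detects all of $B^*\psi$ and $D_A^*\psi$, which is exactly what the automatic $G$-invariance of $D_A^*\psi$ and $B^*\psi$ overcomes.
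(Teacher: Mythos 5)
Your proof is correct, but it takes a genuinely different route from the paper's. The paper's argument is a two-line reduction: given $\psi\in\Gamma(W_-)^G$, it invokes surjectivity of $dH$ in the \emph{ordinary} (non-invariant) configuration space to get some preimage $(a,\varphi)\in\mathcal{A}(W_+)\times\Gamma(W_+)$, and then averages, setting $(\tilde a,\tilde\varphi)=\int_G h^*(a,\varphi)\,d\mu(h)$; by $G$-equivariance of $dH$ and $G$-invariance of $\psi$, the average still maps to $\psi$, and it now lies in the invariant sector. That argument never touches the cokernel, needs no closed-range discussion, and works verbatim for any compact Lie group $G$, not just finite ones. You instead re-prove transversality from scratch inside the invariant sector: a semi-Fredholm splitting $dH=T_0+K$ to get closed range, then the annihilator computation with unique continuation, with the crux being that $D_A^*\psi$ and $B^*\psi$ are automatically $G$-invariant, so testing only against invariant fields still detects them. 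That observation is the dual face of the paper's trick --- equivalently, the isotypic decomposition is $L^2$-orthogonal and $dH$ block-diagonalizes, so averaging the preimage and invariance of the adjoint outputs are the same phenomenon --- but your version buys self-containedness (you do not quote the ordinary surjectivity result) and makes the Fredholm structure of the invariant block explicit, at the cost of length. Two small points of hygiene: the regularity of the annihilator element $\psi$ should come from elliptic regularity applied to the weak equation $D_A^*\psi=0$ rather than from ``$\psi$ continuous for $l$ large'' (a priori $\psi$ lives in a negative Sobolev space as a dual element), and your compactness claim for $K$ needs the standard Sobolev multiplication estimates in dimension four (harmless for $l\geq 2$); neither is a genuine gap.
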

\begin{proof}
Obviously $d{H}$ restricted to the last factor of the domain is onto
the last factor of the range. Using the surjectivity in the ordinary
setting,  for any element $\psi\in \Gamma(W_-)^G$, there exists
an element  $(a,\varphi)\in  \mathcal{A}(W_+)\times \Gamma(W_+)$
such that $d{H}(a,\varphi,0)=\psi$. The average
$$(\tilde{a},\tilde{\varphi}):=\int_G h^*(a,\varphi)\
d\mu(h):=( \int_G h^*a\ d\mu(h) , \int_G h^*\varphi\ d\mu(h))$$
using a unit-volume $G$-invariant metric on $G$ is an element of
$\mathcal{A}(W_+)^G\times \Gamma(W_+)^G$. It follows from the
smoothness of the $G$-action that every $h^*(a,\varphi)$ and hence
$(\tilde{a},\tilde{\varphi})$ belong to the same Sobolev space as
$(a,\varphi)$.
Moreover it still satisfies
\begin{eqnarray*}
d{H}(\tilde{a},\tilde{\varphi},0)&=& \int_G dH (h^*(a,\varphi,0))\
d\mu(h)\\ &=& \int_G h^* dH ((a,\varphi,0))\
d\mu(h)\\ &=& \int_G h^* \psi\
d\mu(h)\\ &=& \psi,
\end{eqnarray*}
where we used the fact that $d{H}$ is a $G$-equivariant differential operator. This completes the proof.
\end{proof}

Assuming that $b_2^{+}(M)^G$ is nonzero,
$\mathfrak{X}_\varepsilon$ consists of irreducible solutions. By
the above lemma, $\cup_{\varepsilon}\mathfrak{X}_\varepsilon$ is a
smooth submanifold, and applying Smale-Sard theorem to the
projection map onto $\Gamma(\Lambda^2_+(M;i\Bbb R))^G$,
$\mathfrak{X}_\varepsilon$  for generic $\varepsilon$ is also
smooth. (Nevertheless $\frak{M}_\varepsilon$ for that $\varepsilon$ may not be smooth in general. Its obstruction is explained in \cite{cho}.)   From now on, we will always assume that a generic
$\varepsilon$ is chosen so that $\frak{X}_\varepsilon$ is smooth,
and often omit the notation of $\varepsilon$, if no confusion
arises.

Its dimension and orientability can be obtained in the same way as the ordinary Seiberg-Witten moduli space. The linearization $dH$ is deformed by a homotopy to
$$d^++2d^* : \Gamma(\Lambda^1)^G\rightarrow
\Gamma(\Lambda^0\oplus\Lambda^2_+)^G$$ and $$D_A :
\Gamma(W_+)^G\rightarrow \Gamma(W_-)^G$$ so that the virtual dimension of  $\mathfrak{X}$ is equal to $$\dim H_1(M;\Bbb R)^G-b_2^+(M)^G-1+2(\dim_{\Bbb C}(\ker D_A)^G-\dim_{\Bbb C}(\textrm{coker} D_A)^G),$$
and its orientation can be assigned by fixing the homology orientation of $H_1(M;\Bbb R)^G$ and $H_2^+(M;\Bbb R)^G$. When $G$ is finite, one can use Lefschetz-type formula to explicitly compute the last term $\textrm{ind}^G D_A$ in the above formula. For more details, one may consult \cite{cho}.

\begin{thm}\label{cpt}
When $G$ is finite, $\mathfrak{X}_\varepsilon$ for any $\varepsilon$ is compact.
\end{thm}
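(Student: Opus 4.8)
The plan is to run the standard proof of compactness of the ordinary Seiberg--Witten moduli space, checking at each step that every estimate and every gauge-fixing can be carried out inside the $G$-invariant configuration space; the finiteness of $G$ enters only to guarantee the last point. Fix the $G$-invariant metric $g$ and the perturbation $\varepsilon$, and let $[A_n,\Phi_n]$ be an arbitrary sequence in $\mathfrak{X}_\varepsilon$, represented by $G$-invariant pairs $(A_n,\Phi_n)$ solving $H_\varepsilon=0$. I want to extract a subsequence converging, after $G$-invariant gauge transformations, to a $G$-invariant solution; its class then lies in $\mathfrak{X}_\varepsilon$ and is the desired limit, which proves sequential compactness.

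First I would establish the a priori bounds. Applying the Weitzenb\"ock formula to $D_{A_n}\Phi_n=0$ together with the curvature equation $F_{A_n}^+=\Phi_n\otimes\Phi_n^*-\tfrac{|\Phi_n|^2}{2}\textrm{Id}-\varepsilon$ yields, by the usual maximum-principle argument, a uniform $C^0$ bound on $|\Phi_n|$ depending only on $g$, $\sup|\varepsilon|$ and the scalar curvature; this bound is insensitive to $G$-invariance since each $(A_n,\Phi_n)$ is a genuine solution. The curvature equation then bounds $F_{A_n}^+$ pointwise, while the harmonic part of $F_{A_n}$ is pinned down by $c_1(\mathfrak{s})$ and hence is independent of $n$; together these control the full curvature $F_{A_n}$ in $L^2$, indeed in every $L^p$.

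Next comes the gauge fixing, which is the one place where the group action must be handled with care. Fixing a $G$-invariant base connection $A_0$, I would place each $A_n$ in Coulomb gauge relative to $A_0$, that is, arrange $d^*(A_n-A_0)=0$. Because both $A_0$ and the condition $d^*(\cdot)=0$ are $G$-natural and each $A_n$ is already $G$-invariant, the small Coulomb gauge transformation $\gamma_n$ is unique and therefore lies in $\mathcal{G}^G$; equivalently one may average over $G$ as in the proof of the preceding lemma. Replacing $(A_n,\Phi_n)$ by $\gamma_n\cdot(A_n,\Phi_n)$, which is again $G$-invariant, the bounds upgrade by elliptic bootstrapping off the system $d^*(A_n-A_0)=0$, $d^+(A_n-A_0)=F_{A_n}^+-F_{A_0}^+$ and $D_{A_n}\Phi_n=0$ to uniform $L^2_{l+1}$ bounds on $(A_n,\Phi_n)$, all the while staying inside the $G$-invariant configuration space.

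Finally I would extract a weakly $L^2_{l+1}$-convergent subsequence of the $\gamma_n\cdot(A_n,\Phi_n)$ and, by Rellich compactness, a strongly $L^2_l$-convergent one, with limit $(A,\Phi)$. Since every $\gamma_n\cdot(A_n,\Phi_n)$ is $G$-invariant and $G$-invariance is a closed condition, the limit is $G$-invariant, and since $H_\varepsilon$ is continuous the limit solves the equations; hence $[A,\Phi]\in\mathfrak{X}_\varepsilon$. The main obstacle is exactly the equivariant gauge fixing: the ordinary Uhlenbeck argument furnishes gauge transformations that a priori destroy $G$-invariance, and the crux is the observation that, relative to a $G$-invariant background, the Coulomb gauge transformation is canonical and hence automatically $G$-invariant. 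It is here that compactness of $G$ is essential, ensuring that the entire limiting process takes place within the $G$-invariant slice and that the limiting gauge equivalence is realized by an element of $\mathcal{G}^G$.
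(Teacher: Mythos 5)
Your a priori estimates (Weitzenb\"ock $C^0$ bound on $\Phi_n$, pointwise control of $F_{A_n}^+$, Rellich extraction, closedness of $G$-invariance) are fine and match the standard outline the paper follows, but the gauge-fixing step --- which you correctly identify as the crux --- is mishandled, and the gap is fatal when $b_1(M)>0$. The Coulomb condition $d^*(A_n-A_0)=0$ fixes only the exact part of $A_n-A_0$; the gauge transformation achieving it is \emph{not} unique, but unique only up to constants and harmonic maps $M\to S^1$, i.e.\ up to the lattice $H^1(M;\Bbb Z)$. Correspondingly, the elliptic system $d^*\oplus d^+$ you bootstrap from has kernel the harmonic $1$-forms, so Coulomb gauge plus curvature bounds controls $A_n-A_0$ only modulo its harmonic part, which can diverge. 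In the ordinary proof one kills this by applying ``large'' gauge transformations with nontrivial winding to bring the harmonic part into a fundamental domain --- but such maps are in general \emph{not} $G$-invariant, and this is precisely where the paper must work. Its equivariant gauge-fixing lemma handles the harmonic part as follows: for an integral $G$-invariant harmonic class $a^h=-d\ln\mathfrak{g}$ with $\mathfrak{g}\in\mathcal{G}$ not $G$-invariant, the product $\prod_{g\in G}g^*\mathfrak{g}$ \emph{is} $G$-invariant and gauges away $|G|\,a^h$; hence modulo $\mathcal{G}^G$ the harmonic part can only be reduced to a remainder of size up to roughly $|G|\,\|a^h\|$, which is absorbed into the additive constant $K$ in the estimate $\|\alpha\|^2_{L^2_l}\le C\|F_A^+\|^2_{L^2_{l-1}}+K$. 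Finiteness of $G$ is indispensable exactly there.

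This also exposes the error in your closing sentence: what the theorem needs is $G$ \emph{finite}, not merely compact, whereas your argument never uses more than compactness of $G$ --- so if it were correct it would prove too much. The paper's remark immediately after the theorem gives the counterexample: $G=S^1$ acting on $M=S^1\times Y$ with the trivial Spin$^c$ structure. The reducible solutions $A_n=A_0+n\,d\theta$ are $S^1$-invariant, satisfy your Coulomb condition ($d^*(n\,d\theta)=0$), have identical curvature and vanishing spinor, so they pass every estimate in your proposal; yet they diverge modulo $\mathcal{G}^{S^1}$, since an $S^1$-invariant gauge transformation is pulled back from $C^\infty(Y,S^1)$ and cannot unwind $d\theta$. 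Your argument breaks at exactly the uncontrolled harmonic part, and repairing it requires the paper's key lemma (or an equivalent treatment of the sublattice of $H^1(M;\Bbb Z)$ realizable by $G$-invariant gauge transformations), not the canonicity of the small Coulomb gauge transformation alone.
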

\begin{proof}
Following the proof for the ordinary Seiberg-Witten moduli space, we need the $G$-equivariant version of the gauge fixing lemma.
\begin{lem}
Let $\frak{L}$ be a $G$-equivariant complex line bundle over $M$  with a hermitian metric, and $A_0$ be a fixed $G$-invariant smooth unitary connection on it.

Then for any $l\geq 0$ there are constants $K, C>0$ depending on $A_0$ and $l$ such that for any $G$-invariant $L^2_l$ unitary connection $A$ on $\frak{L}$ there is a $G$-invariant $L^2_{l+1}$ change of gauge $\sigma$ so that $\sigma^*(A)=A_0+\alpha$ where $\alpha\in L^2_l(T^*M\otimes i\Bbb R)^G$ satisfies
$$d^*\alpha=0,\ \ \ \textrm{and}\ \ \ \ ||\alpha||^2_{L^2_l}\leq C||F^+_A||^2_{L^2_{l-1}}+K.$$
\end{lem}
\begin{proof}
We know that a gauge-fixing $\sigma$ with the above estimate always exists, but we need to prove the existence of $G$-invariant $\sigma$.
Write $A$ as $A_0+a$ where $a\in L^2_l(T^*M\otimes i\Bbb R)^G$. Let $a=a^{harm}+df+d^*\beta$ be the Hodge decomposition. By the $G$-invariance of $a$, so are  $a^{harm}, df$, and $d^*\beta$.
Applying the ordinary gauge fixing lemma to $A_0+d^*\beta$, we have $$||d^*\beta||^2_{L^2_l}\leq C'||F^+_{A_0+d^*\beta}||^2_{L^2_{l-1}}+K'=C'||F^+_A||^2_{L^2_{l-1}}+K'$$ for some constants $C',K'>0$.
Defining a $G$-invariant $i\Bbb R$-valued function $f_{av}=\frac{1}{|G|}\sum_{g\in G}g^*f$, we have
$$df=\frac{1}{|G|}\sum_{g\in G}g^*df=d(f_{av})=-d\ln \exp(-{f_{av})},$$ and hence $df$ can be gauged away by a $G$-invariant gauge transformation $\exp(-f_{av})$.
Write $a^{harm}$ as $(n|G|+m)a^{h}$ for $m\in [0,|G|)$ and an integer $n\geq 0$, where $a^h\in H^1(M;\Bbb Z)^G$ is not a positive multiple of another element of $H^1(M;\Bbb Z)^G$. There exists $\frak{g}\in \mathcal{G}$ such that $a^h=-d\ln \frak{g}$. In general $\frak{g}$ is not $G$-invariant, but $$|G|a^h=\sum_{g\in G}g^*a^h=-d\ln \prod_{g\in G}g^*\frak{g},$$ and hence $n|G|a^h$ can be gauged away by a $G$-invariant gauge transformation $\prod_{g\in G}g^*\frak{g}^n$.
In summary, $A_0+a$ is equivalent to $A_0+ma^{h}+d^*\beta$ after a $G$-invariant gauge transformation, and
\begin{eqnarray*}
||ma^{h}+d^*\beta||^2_{L^2_l}&\leq& (||ma^{h}||_{L^2_l}+||d^*\beta||_{L^2_l})^2\\ &\leq& |G|^2||a^{h}||_{L^2_l}^2+2|G|||a^{h}||_{L^2_l}||d^*\beta||_{L^2_l}+||d^*\beta||_{L^2_l}^2\\ &\leq& 3|G|^2||a^{h}||^2_{L^2_l}+ 3||d^*\beta||_{L^2_l}^2  \\   &=& K''+3C'||F^+_A||^2_{L^2_{l-1}}+3K'
\end{eqnarray*}
for a constant $K''>0$. This completes the proof.
\end{proof}
Now the rest of the compactness proof proceeds in the same way as the ordinary case using the Weitzenb\"ock formula and standard elliptic and Sobolev estimates. For details the readers are referred to \cite{morgan}.
\end{proof}

\begin{rmk}
If $G$ is not finite, $\mathfrak{X}_\varepsilon$ may not be compact.

For example, consider $M=S^1\times Y$ with the trivial Spin$^c$
structure and its obvious $S^1$ action, where $Y$ is a closed
oriented 3-manifold. For any $n\in\Bbb Z$, $n d\theta$ where
$\theta$ is the coordinate on $S^1$ is an $S^1$-invariant
reducible solution. Although $n d\theta$ is gauge equivalent to 0,
but never via an $S^1$-invariant gauge transformation which is an
element of the pull-back of $C^\infty(Y,S^1)$. Therefore as
$n\rightarrow \infty$, $n d\theta$ diverges modulo
$\mathcal{G}^{S^1}$, which proves that $\mathfrak{X}_0$ is
non-compact.
\end{rmk}

In the rest of this paper, we assume that $G$ is finite. Then note that $G$ induces smooth actions on $$\mathcal{C}:=\mathcal{A}(W_+)\times \Gamma(W_+),$$ $$\mathcal{B}^*=(\mathcal{A}(W_+)\times (\Gamma(W_+)-\{0\}))/ \mathcal{G},$$ and also the Seiberg-Witten moduli space $\frak{M}$ whenever it is smooth.

Since $\frak{X}_\varepsilon$ is a subset of $\frak{M}_\varepsilon$, (actually a subset of the fixed locus $\mathfrak{M}^G_\varepsilon$ of a $G$-space $\frak{M}_\varepsilon$), we can define the
$G$-monopole invariant $SW^G_{M,\mathfrak{s}}$ by integrating the same universal cohomology classes as in the ordinary Seiberg-Witten invariant $SW_{M,\mathfrak{s}}$. Thus using the $\Bbb Z$-algebra isomorphism $$\mu_{M,\frak{s}} : \Bbb Z[H_0(M;\Bbb Z)]\otimes \wedge^*H_1(M;\Bbb Z)/\textrm{torsion}\ \tilde{\rightarrow}\ H^*(\mathcal{B}^*;\Bbb Z),$$ 
we define it as a function $$SW^G_{M,\frak{s}}: \Bbb Z[H_0(M;\Bbb
Z)]\otimes \wedge^*H_1(M;\Bbb Z)/\textrm{torsion}\rightarrow \Bbb
Z$$ $$\alpha\mapsto \langle [\frak{X}],\mu_{M,\frak{s}}(\alpha)
\rangle,$$ which is set to be 0 when the degree of
$\mu_{M,\frak{s}}(\alpha)$ does not match $\dim \frak{X}$. To be
specific, for  $[c]\in H_1(M,\Bbb Z)$,
$$\mu_{M,\frak{s}}([c]):=Hol_c^*([d\theta])$$ where $[d\theta]\equiv 1\in H^1(S^1,\Bbb Z)$ and  $Hol_c: \mathcal{B}^*\rightarrow S^1$ is given by the holonomy of each connection around $c$, and  $\mu_{M,\frak{s}}(U)$  for $U\equiv 1\in H_0(M,\Bbb Z)$ is given by the first
Chern class of the $S^1$-bundle
$$\mathcal{B}^*_o=(\mathcal{A}(W_+)\times (\Gamma(W_+)-\{0\}))/
\mathcal{G}_o$$ over $\mathcal{B}^*$ where $\mathcal{G}_o=\{g\in
\mathcal{G}| g(o)=1\}$ is the based gauge group for a fixed base
point $o\in M$. (The $S^1$-bundles obtained by choosing a
different base point are all isomorphic by the connectedness of
$M$.)

As in the ordinary case, a different choice of a $G$-invariant
metric and a $G$-invariant perturbation $\varepsilon$ gives a
cobordant $\frak{X}$ so that $SW^G_{M,\mathfrak{s}}$ is
independent of such choices, if $b_2^{+}(M)^G> 1$. When
$b_2^{+}(M)^G= 1$, one should get an appropriate wall-crossing
formula.


When $\frak{M}$ happens to be smooth for a $G$-invariant
perturbation, the induced $G$-action on it is a smooth action, and
hence $\mathfrak{M}^G$ is a smooth submanifold. Moreover if the
finite group action is free, then $\pi: M\rightarrow M/G$ is a
covering, and $\frak{s}$ is the pull-back of a Spin$^c$ structure
on $M/G$, which is determined up to
the kernel of $\pi^*: H^2(M/G,\Bbb Z)\rightarrow H^2(M,\Bbb Z),$ and  all the irreducible solutions of the upstairs is precisely the pull-back of the corresponding irreducible solutions of the downstairs :
\begin{thm}[\cite{RW, naka}]\label{nakamur}
Let $M$, $\mathfrak{s}$, and $G$ be as above. Under the assumption that $G$ is finite and
the action is free, for a $G$-invariant generic perturbation $$\frak{X}_{M,\mathfrak{s}}=\mathfrak{M}_{M/G,\mathfrak{s}'} \ \ \ \ \textrm{and} \ \ \ \ \mathfrak{M}^G_{M,\mathfrak{s}}\backsimeq\coprod_{c\in \ker \pi^*}\mathfrak{M}_{M/G,\mathfrak{s}'+c},$$ where the second one is a homeomorphism in general, and  $\mathfrak{s}'$ is the Spin$^c$ structure on $M/G$ induced from  $\frak{s}$ and its $G$-action.
\end{thm}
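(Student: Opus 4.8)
The plan is to use the fact that a free action of a finite group $G$ turns $\pi:M\to M/G$ into a covering map, under which $G$-invariant objects upstairs correspond to arbitrary objects on the quotient. First I would set up this dictionary precisely. Since the action is free, $\pi^*$ identifies $G$-invariant metrics on $M$ with metrics on $M/G$, the space $\mathcal{A}(W_+)^G$ with the corresponding space of connections for $\mathfrak{s}'$, the $G$-invariant spinors $\Gamma(W_\pm)^G$ with the spinors of $\mathfrak{s}'$, and $G$-invariant perturbations with perturbations on $M/G$. The key point is that a $G$-invariant gauge transformation in $\mathcal{G}^G=\mathrm{Map}(M,S^1)^G$ factors through $M/G$, so $\mathcal{G}^G\cong\mathrm{Map}(M/G,S^1)$ is exactly the gauge group downstairs. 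Because the map $H$ is assembled from natural local operators ($D_A$ and $F_A^+$), it commutes with $\pi^*$; hence a $G$-invariant configuration solves the equations on $M$ if and only if the descended configuration solves them on $M/G$.

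Granting the dictionary, the first identification is immediate: it gives a bijection between $G$-invariant solutions modulo $\mathcal{G}^G$ and solutions on $M/G$ modulo its gauge group, and since genericity of a $G$-invariant $\varepsilon$ is equivalent to genericity of its descent (the Sard--Smale argument is literally the same under the dictionary), this upgrades to the stated equality $\mathfrak{X}_{M,\mathfrak{s}}=\mathfrak{M}_{M/G,\mathfrak{s}'}$, realized as a diffeomorphism of smooth manifolds.

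For the second identification I would analyze the fixed locus $\mathfrak{M}^G$ directly. A point $[A,\Phi]\in\mathfrak{M}^G$ is irreducible (as $b_2^+(M)^G>0$), so for each $g\in G$ there is a \emph{unique} $\gamma_g\in\mathcal{G}$ with $g^*(A,\Phi)=\gamma_g\cdot(A,\Phi)$, uniqueness following from triviality of the stabilizer. Writing out $(gh)^*=h^*g^*$ shows that $g\mapsto\gamma_g$ is a $1$-cocycle, $\gamma_{gh}=(h^*\gamma_g)\,\gamma_h$, and replacing $(A,\Phi)$ by $\delta\cdot(A,\Phi)$ changes it by the coboundary $\gamma_g\mapsto(g^*\delta)\,\gamma_g\,\delta^{-1}$; thus each point of $\mathfrak{M}^G$ determines a well-defined class in $H^1(G;\mathrm{Map}(M,S^1))$. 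This cocycle is precisely the data of a new lift of the $G$-action to $\mathfrak{s}$ with respect to which $(A,\Phi)$ is genuinely $G$-invariant. For a free finite action the lifts of the $G$-action to $\mathfrak{s}$ form a torsor over $\ker(\pi^*:H^2(M/G;\mathbb{Z})\to H^2(M;\mathbb{Z}))$ -- equivalently, $G$-equivariant $\mathrm{Spin}^c$ structures on $M$ descend to $\mathrm{Spin}^c$ structures on $M/G$, and those pulling back to $\mathfrak{s}$ differ by $\ker\pi^*$. Recording the difference of the new lift from the fixed one gives the element $c\in\ker\pi^*$, and applying the first identification to the $c$-twisted lift realizes the locus with cocycle class $c$ as $\mathfrak{M}_{M/G,\mathfrak{s}'+c}$. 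Since the class $c$ is locally constant on $\mathfrak{M}^G$, the strata are open and closed, which assembles them into the disjoint-union homeomorphism.

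The main obstacle is the cohomological bookkeeping just described: showing that the gauge-valued cocycle $\{\gamma_g\}$ is faithfully recorded by an element of $\ker\pi^*$, and that distinct classes correspond bijectively to the distinct $\mathrm{Spin}^c$ structures $\mathfrak{s}'+c$ on the quotient. The only-a-homeomorphism caveat is also explained here: the single perturbation we fixed is generic for the $c=0$ stratum (yielding the smooth equality of the first part), but need not be generic for the twisted structures with $c\neq0$, and moreover $\mathfrak{M}$ itself need not be cut out transversally by a $G$-invariant perturbation, so the identification of the remaining strata is topological rather than smooth.
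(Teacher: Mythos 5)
The paper does not actually prove this theorem---it is quoted from Ruan--Wang \cite{RW} and Nakamura \cite{naka}---and your proposal reconstructs precisely the argument those references (and the paper's one-sentence heuristic preceding the statement) rely on: the covering-space dictionary identifying $G$-invariant configurations, gauge transformations, and perturbations on $M$ with ordinary ones on $M/G$ for the first identification, and the classification of lifts of the $G$-action via cocycles in $H^1(G;\mathrm{Map}(M,S^1))$, equivalently via $\ker\pi^*$, for the stratification of $\mathfrak{M}^G_{M,\mathfrak{s}}$. Your argument is correct and takes essentially the same route, including the accurate explanation of why the second identification is only a homeomorphism in general (a $G$-invariant perturbation generic for the invariant problem need not cut out $\mathfrak{M}$ or the twisted strata transversally, as the paper notes with reference to \cite{cho}).
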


Finally we remark that the $G$-monopole invariant may change when
a homotopically different lift of the $G$-action to the Spin$^c$
structure is chosen.

\section{Connected sums and $G$-monopole invariant}

For $(\bar{M}_k,\bar{\frak{s}})$ described in Theorem
\ref{firstth}, there is at least one $G$-action lifted to
$\bar{\frak{s}}$ coming from the given $G$-action on
$(N,\frak{s}_N)$ and the $G$-equivariant gluing of $k$-copies
of $(M,\frak{s})$.
In general, there may be homotopically inequivalent liftings of the $G$-action on $\bar{M}_k$ to $\bar{\frak{s}}$.

Take a $G$-invariant metric of positive scalar curvature on $N$. In order to do the connected sum with $k$ copies of $M$, we perform a Gromov-Lawson type surgery \cite{GL,sung1} around each  point of a free orbit of $G$ keeping the positivity of scalar curvature to get a Riemannian manifold $\hat{N}$ with cylindrical ends with each end isometric to a Riemannian product of a round $S^3$ and $\Bbb R$. We suppose that this is done in a symmetric way so that the $G$-action on $\hat{N}$ is isometric.

On $M$ part, we put any metric and perform a Gromov-Lawson surgery with the same cylindrical end as above. Let's denote this by $\hat{M}$. Now chop the cylinders at sufficiently large length and then glue $\hat{N}$ and $k$-copies of $\hat{M}$ along the boundary to get a desired  $G$-invariant metric $g_k$ on $\bar{M}_k$. Sometimes we mean $(\bar{M}_k,g_k)$ by $\bar{M}_k$.

\begin{thm}
Let $(\bar{M}_k,\bar{\frak{s}})$ in Theorem
\ref{firstth} be endowed with $g_k$ as above. Then for any sufficiently large cylindrical length and some generic perturbation, $\mathfrak{X}_{\bar{M}_k,\bar{\mathfrak{s}}}$  is diffeomorphic to $\frak{M}_{M,\mathfrak{s}}\times T^{\nu}$, where  $\nu=\dim H_1(N;\Bbb R)^{G}$.
\end{thm}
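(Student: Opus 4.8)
The plan is to run a neck-stretching and gluing argument. For cylinder length $T$, compactness (Theorem~\ref{cpt}) gives a uniform energy bound on $\mathfrak{X}_{\bar{M}_k,\bar{\mathfrak{s}}}$, so as $T\to\infty$ any sequence of $G$-invariant solutions converges, after gauge fixing, to a broken configuration living on the central piece $\hat{N}$ and on the $k$ copies of $\hat{M}$, with matching limits along the $S^3$-necks. Because $S^3$ carries positive scalar curvature and $H^1(S^3)=0$, each neck limit is forced to be the trivial flat $U(1)$-connection with vanishing spinor. The goal is then to identify the limiting data on each piece and to show, conversely, that every such collection of data glues back to a unique genuine $G$-invariant solution for all large $T$.

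First I would analyze the $\hat{N}$-part. Since $\hat{N}$ has a $G$-invariant metric of positive scalar curvature and the ends are isometric to round $S^3\times\mathbb{R}$, the Weitzenb\"ock formula forces $\Phi\equiv 0$ there, so every solution restricted to $\hat{N}$ is reducible with $F_A^+$ equal to the small perturbation. As $b_2^+(N)=0$ there is no obstruction to solving $F_A^+=0$, so these reducibles modulo the $G$-invariant gauge group form the $G$-invariant Jacobian $H^1(N;\mathbb{R})^G/H^1(N;\mathbb{Z})^G\cong T^{\nu}$ with $\nu=\dim H_1(N;\mathbb{R})^G$. The hypothesis $c_1^2(\mathfrak{s}_N)=-b_2(N)$ is exactly the statement that the complex index of the Spin$^c$ Dirac operator on $N$ vanishes, $\mathrm{ind}_{\mathbb{C}}D_A=\tfrac18(c_1^2(\mathfrak{s}_N)-\sigma(N))=0$; combined with positive scalar curvature, which kills $\ker D_A$ via Weitzenb\"ock, this makes $D_A$ invertible on $\hat{N}$. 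Consequently the spinor direction of the linearized operator has no cokernel on the central piece, the gluing is unobstructed there, and the reducibles contribute precisely the smooth torus factor $T^{\nu}$.

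Next I would treat the $\hat{M}$-part. On a single $\hat{M}$ with its positive-scalar-curvature cylindrical end, a finite-energy solution decays exponentially to the trivial reducible on $S^3\times\mathbb{R}$, and the standard fact that attaching such an end does not alter the solution count identifies the moduli space of these solutions modulo gauge with the ordinary Seiberg--Witten moduli space $\mathfrak{M}_{M,\mathfrak{s}}$, which is smooth of the expected dimension since $b_2^+(M)>1$ and the perturbation is generic. Because $G$ permutes the $k$ copies of $\hat{M}$ simply transitively (they sit over a free orbit), a $G$-invariant configuration is determined by its restriction to any one copy, so the $\hat{M}$-data contributes a single factor $\mathfrak{M}_{M,\mathfrak{s}}$. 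The only remaining parameters are the $U(1)$ gluing angles at the $k$ necks. Without symmetry these would, together with the $S^1$-stabilizer of the reducible on $\hat{N}$, leave a residual $(S^1)^{k-1}$; but $G$-invariance forces all $k$ angles to coincide, and the single resulting angle is absorbed by the constant-gauge stabilizer of the reducible, leaving no extra factor. This cancellation is precisely where the permutation symmetry is used.

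Finally, for the gluing step I would build a pre-gluing map sending a pair consisting of a solution on $\hat{M}$ and a point of $T^{\nu}$ to an approximate $G$-invariant solution on $(\bar{M}_k,g_k)$, obtained by splicing $k$ symmetric copies of the $\hat{M}$-solution to the chosen reducible across the necks, and then correct it to an exact solution by the implicit function theorem. For all sufficiently large $T$ the linearized operator is uniformly invertible on the $G$-invariant complement of the obstruction: invertibility of $D_A$ on $\hat{N}$ handles the spinor direction, and the $b_2^+(M)>1$ transversality on $\hat{M}$ handles the rest, so the correction exists, is unique, and depends smoothly on the data, yielding a smooth injection onto $\mathfrak{X}_{\bar{M}_k,\bar{\mathfrak{s}}}$. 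Surjectivity follows from the neck estimates of the first paragraph, which guarantee that every $G$-invariant solution is close to---hence equal to---a glued one for large $T$; details of the analytic machinery can be modelled on \cite{morgan}. I expect the main obstacle to be this gluing analysis in the mixed reducible/irreducible setting: the reducible limit on $\hat{N}$ has nontrivial stabilizer, so one must track the $S^1$-action on the gluing parameters and verify that the linearized operator stays surjective after restriction to $G$-invariant fields, rather than relying on the cleaner transversality available when both sides are irreducible.
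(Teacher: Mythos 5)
Your overall framework coincides with the paper's: the same decomposition into $\hat{N}$ and $k$ copies of $\hat{M}$, the same use of positive scalar curvature plus $c_1^2(\frak{s}_N)=-b_2(N)$ to make the $\hat{N}$-piece a smooth unobstructed torus of reducibles, and an equivariant gluing map onto $\frak{M}_{M,\frak{s}}\times T^\nu$. The genuine gap is in your third paragraph and in the surjectivity step: the assertion that ``$G$-invariance forces all $k$ angles to coincide, and the single resulting angle is absorbed by the constant-gauge stabilizer'' conflates invariance of a \emph{configuration} with invariance of its \emph{gauge class}. The fixed locus $\frak{M}^G_{\bar{M}_k}$ is not one copy of $\frak{M}_{M}\times T^\nu$: solving the fixed-point condition only modulo gauge gives angle differences equal to a common constant $\theta$ with $k\theta\equiv 0 \bmod 2\pi$, hence $k$ components indexed by $\theta=2\pi j/k$, $j=0,\dots,k-1$. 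All $k$ are $G$-fixed gauge classes, but $\frak{X}$ consists only of classes admitting a literally $G$-invariant representative, and is in general a proper subset of $\frak{M}^G$ (compare Theorem \ref{nakamur}, where $\frak{M}^G$ is a disjoint union over $\ker\pi^*$ and $\frak{X}$ is a single piece — so the twisted components are a real phenomenon, not an artifact). Your surjectivity argument (``every $G$-invariant solution is close to — hence equal to — a glued one'') silently assumes that the fixed gauge class underlying an invariant solution lies in the untwisted component; excluding the other $k-1$ components from $\frak{X}$ is the actual content of the theorem, and your proposal supplies no mechanism for it.

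This is precisely where the hypothesis of Theorem \ref{firstth} that the action on $N$ is \emph{free or has at least one fixed point} enters — a hypothesis your proof never uses, which is a reliable sign something is missing. The paper shows that if $\frak{g}\cdot\Xi_\theta$ were invariant for $\theta\neq 0$, then $\sigma^*(\frak{g})=\frak{g}e^{-i\vartheta}$ with $e^{i\vartheta}$ $C^0$-close to the constant $e^{i\theta}$, and evaluating at a fixed point $p$ of the action yields $1\approx e^{-i\theta}$, a contradiction; note that soft estimates alone cannot do this, since iterating the relation $k$ times only gives $e^{-ik\theta}=1$, which is automatically satisfied. In the free case the exclusion is instead obtained by passing to the quotient $M\#(N/G)$ via Theorem \ref{nakamur}, a case your argument also does not treat. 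One part of your proposal does legitimately simplify the paper's route: by running the Newton correction inside the $G$-invariant Hilbert space with an equivariant right inverse, you get the inclusion of the $\theta=0$ family into $\frak{X}$ for free, whereas the paper corrects in the full configuration space and must again invoke the fixed point (via the Coulomb-gauge argument with $(\sigma^*)^k=\mathrm{Id}$) to produce an invariant representative. But that only proves one inclusion; without the fixed-point evaluation or the free-quotient argument, the reverse inclusion — and hence the diffeomorphism $\frak{X}_{\bar{M}_k,\bar{\frak{s}}}\simeq\frak{M}_{M,\frak{s}}\times T^\nu$ rather than $\simeq k$ copies of it — is not established.
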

\begin{proof}
First we consider the case when the $G$-action on $N$ has a fixed point.

Let's first figure out the ordinary moduli space $\frak{M}_{\bar{M}_k}$ of
$(\bar{M}_k,\bar{\mathfrak{s}})$. Let $\frak{M}_{\hat{M}}$ and
$\frak{M}_{\hat{N}}$ be the moduli spaces of finite-energy
solutions of Seiberg-Witten equations on $(\hat{M},\mathfrak{s})$ and
$(\hat{N},\mathfrak{s}_N)$ respectively. From now on, $[\ \cdot\ ]$ of a configuration $\cdot$ denotes its gauge equivalence class.

By the gluing theory\footnote{For more details, one may consult
\cite{KM, nicol, safari, vid1, sung2}.} of Seiberg-Witten moduli
space, which is now a standard method in gauge theory, $\frak{M}_{M}$
is diffeomorphic to $\frak{M}_{\hat{M}}$. In
$\frak{M}_{\hat{M}}$, we use a compact-supported self-dual 2-form
for a generic perturbation.

Since $\hat{N}$ has a metric of positive scalar curvature and the property that $b_2^+(\hat{N})=0$ and $c_1^2(\frak{s}_{\hat{N}})=-b_2(\hat{N})$,  $\hat{N}$ also has no gluing obstruction even without perturbation so that $\frak{M}_N$ is diffeomorphic to
$\frak{M}_{\hat{N}}=\frak{M}_{\hat{N}}^{red},$  which can be identified with the space of $L^2$-harmonic 1-forms on $\hat{N}$ modulo gauge, i.e. $$H^1_{cpt}(\hat{N},\Bbb R)/H^1_{cpt}(\hat{N},\Bbb Z)\simeq T^{b_1(N)}.$$ (Here by $T^0$ we mean a point, and $\frak{M}^{red}\subset \frak{M}$ denotes the moduli space of reducible solutions.)

As is well-known, approximate solutions on $\bar{M}_k$ are
obtained by chopping-off solutions on each $\hat{M}$ and $\hat{N}$
at a sufficiently large cylindrical length and then grafting them
to $\bar{M}_k$ via a sufficiently slowly-varying partition of
unity in a $G$-invariant way. More precise prescription of grafting is as follows. First, let's name $k$ $M$ parts of $\bar{M}_k$. Choose one of $k$ $M$ parts and we call it the 1st $M$ part. To assign other $k-1$ $M$ parts, let's denote $G$ by $\{\sigma_1, \sigma_2,\cdots,\sigma_k=e\}$ where $e$ is the identity element. Since each $M$ part of $\bar{M}_k$ is the image of the 1st $M$ part under exactly one of $\sigma_i\in G$, lets call it the $i$-th $M$ part. Now choosing an identification of the Spin$^c$ structure on the 1st $M$ part with that of $\hat{M}$, and the identifications of Spin$^c$ structures on other $M$ parts with that of $\hat{M}$ can be done using the $G$-action on $\bar{\frak{s}}$. Once there is such identification, we can graft a cut-off solution on $\hat{M}$ to each $M$ part of $\bar{M}_k$.

In taking cut-offs of solutions on $\hat{N}$, we use a special gauge-fixing condition. Fix a $G$-invariant connection $\eta_0$ such that $[\eta_0]\in\frak{M}_{\hat{N}}$,
which exists by taking the $G$-average of any reducible
solution, and take compact-supported closed 1-forms
$\beta_1,\cdots,\beta_{b_1(N)}$ which generate $H^1_{cpt}(\hat{N};\Bbb Z)$ and vanish on the cylindrical gluing regions. Any element $[\eta]\in\frak{M}_{\hat{N}}$ can be
expressed as $$\eta=\eta_0+\sum_{i}c_i\beta_i$$ for $c_i\in \Bbb
R/ \Bbb Z$, and the gauge equivalence class of its cut-off
$$\tilde{\eta}:=\rho\eta=\rho\eta_0+\sum_{i}c_i\beta_i$$ using a
$G$-invariant cut-off function $\rho$ which is equal to 1
on the support of every $\beta_i$ is well-defined independently of
the mod $\Bbb Z$ ambiguity of each $c_i$.

Similarly, for the cut-off procedure to be well-defined independently of the choice of a gauge representative  on $\frak{M}_{\hat{M}}$, one needs to take a gauge-fixing so that homotopy classes of gauge transformations on $\hat{M}$ are parametrized  by $H^1_{cpt}(\hat{M},\Bbb Z)$, whose elements are gauge transformations constant on gluing regions.
Thus the gluing produces a smooth map from $$(\prod_{i=1}^k\frak{M}_{\hat{M}})\times \frak{M}_{\hat{N}}:=(\underbrace{\frak{M}_{\hat{M}}\times \cdots \times
\frak{M}_{\hat{M}}}_k)\times \frak{M}_{\hat{N}}$$
to a so-called approximate moduli space
$\tilde{\frak{M}}_{\bar{M}_k}$ in $\frak{B}^*_{\bar{M}_k}$. This gluing map is one to one, because of the unique continuation principle (\cite{KM}) of Seiberg-Witten equations.  From the
unobstructedness of gluing, $\tilde{\frak{M}}_{\bar{M}_k}\subset
\frak{B}^*_{\bar{M}_k}$ is a smoothly embedded submanifold diffeomorphic to
\begin{eqnarray*}
((\prod_{i=1}^k\frak{M}_{\hat{M}}^o)/S^1)\times\frak{M}_{\hat{N}} &=&
((\prod_{i=1}^k\frak{M}_{\hat{M}})\tilde{\times} T^{k-1})\times T^{b_1(N)},
\end{eqnarray*}
where $\frak{M}_{\hat{M}}^o$ is the based moduli space fibering
over $\frak{M}_{\hat{M}}$ with fiber $\mathcal G_o/ \mathcal
G=S^1$, and $\tilde{\times}$ means a $T^{k-1}$-bundle over
$\prod_{i=1}^k\frak{M}_{\hat{M}}$.

As the length of the cylinders in $\bar{M}_k$ increases,
approximate solutions get close to genuine solutions
exponentially. Once we choose smoothly-varying normal subspaces to
tangent spaces of $\tilde{\frak{M}}_{\bar{M}_k}\subset
\frak{B}^*_{\bar{M}_k}$, the Newton method gives a diffeomorphism
$$\Upsilon : \tilde{\frak{M}}_{\bar{M}_k}\rightarrow
\frak{M}_{\bar{M}_k}$$ given by a very small isotopy along the
normal directions. A bit more explanation will be given in Lemma
\ref{saveme}.

An important fact for us is that the same $k$ copies of a compactly supported self-dual 2-form  can be used for the perturbation on $M$ parts, while no perturbation is put on the $N$ part. Along with the $G$-invariance of the Riemannian metric $g_k$, the perturbed Seiberg-Witten equations for $(\bar{M}_k,g_k)$ are $G$-equivariant so that the induced smooth $G$-action on $\mathcal{B}^*_{\bar{M}_k}$ maps $\frak{M}_{\bar{M}_k}$ to itself.

Let's describe elements of $\tilde{\frak{M}}_{\bar{M}_k}$ for
$(\bar{M}_k,g_k)$ more explicitly.  For $[\xi]\in
\frak{M}_{\hat{M}}$, let $\tilde{\xi}$ be an approximate solution
for $\xi$ cut-off at a large cylindrical length, and
$\tilde{\xi}(\theta)$ be its gauge-transform under the gauge
transformation by $e^{i\theta}\in C^\infty(\hat{M},S^1)$. (From
now on, the tilde $\tilde{\ }$ of a solution will mean its
cut-off.)  Any element in $\tilde{\frak{M}}_{\bar{M}_k}$ can be
written as an ordered $(k+1)$-tuple
\begin{eqnarray}\label{paul}
[(\tilde{\xi}_1(\theta_1),\cdots,\tilde{\xi}_{k-1}(\theta_{k-1}),\tilde{\xi}_k(0),
\tilde{\eta})]
\end{eqnarray}
 for each $[\xi_i]\in \frak{M}_{\hat{M}}$ and
constants $\theta_i$'s, where the $i$-th term for $i=1,\cdots , k$
represents the approximate solution grafted on the
$i$-th $M$ part, and the last term is a cut-off of $\eta\in
\frak{M}_{\hat{N}}^{red}$.  In fact, there is a bijective
correspondence
 \begin{eqnarray}\label{general}
\tilde{\frak{M}}_{\bar{M}_k}
\end{eqnarray}
\begin{eqnarray*}
\wr|
\end{eqnarray*}
$$\{ [(\tilde{\xi_1}(\theta_1),\cdots , \tilde{\xi}_{k-1}(\theta_{k-1}),\tilde{\xi_k}(0), \tilde{\eta})]\ |\ [\eta]\in \frak{M}_{\hat{N}},   [\xi_i]\in \frak{M}_{\hat{M}}, \theta_i\in [0,2\pi)\ \forall i
\}.
$$

\begin{lem}
The $G$-action on $\mathcal{B}^*_{\bar{M}_k}$ maps $\tilde{\frak{M}}_{\bar{M}_k}$ to itself.
\end{lem}
\begin{proof}
The $G$-action on $(\bar{M}_k,\bar{\frak{s}})$ can be obviously extended to an action on the Spin$^c$ structure of $\hat{N}\cup \amalg_{i=1}^k\hat{M}$ and also its moduli space of finite-energy monopoles.
Let $\sigma\in G$. By the $G$-invariance of $\rho$, $$\sigma^*\tilde{\eta}=\sigma^*(\rho\eta)=\rho\sigma^*\eta=\widetilde{\sigma^*\eta}.$$

Since $\sigma^*\beta_i$ also gives an element of $H^1_{cpt}(\hat{N};\Bbb Z)$, let's let $\sigma^*\beta_i$ be cohomologous to $\sum_j d_{ij}\beta_j$ for each $i$. Thus
$$\widetilde{\sigma^*\eta}=\rho\sigma^*(\eta_0+\sum_{i}c_i\beta_i)=\rho\eta_0+\sum_{i}c_i\sigma^*\beta_i$$ is gauge-equivalent to $$\rho\eta_0+\sum_{i,j}c_id_{ij}\beta_j$$ which is the cut-off of $\eta_0+\sum_{i,j}c_id_{ij}\beta_j.$

The $G$-action on  the first $k$ components of $(\ref{paul})$ just permutes them.
Thus  a constant gauge transform of $\sigma^*(\tilde{\xi}_1(\theta_1),\cdots,\tilde{\xi}_{k-1}(\theta_{k-1}),\tilde{\xi}_k(0),
\tilde{\eta})$ is also of the type (\ref{paul}).
\end{proof}

Moreover we may assume that the map $\Upsilon$ is $G$-equivariant by the following lemma.
\begin{lem}\label{saveme}
$\Upsilon$ can be made $G$-equivariant, and the smooth submanifold $\frak{M}_{\bar{M}_k}^{G}$
pointwisely fixed under the action is isotopic to
$\tilde{\frak{M}}_{\bar{M}_k}^{G}$, the fixed point set in
$\tilde{\frak{M}}_{\bar{M}_k}$.
\end{lem}
\begin{proof}
To get a $G$-equivariant $\Upsilon$, we need to choose a
smooth normal bundle of
$\tilde{\frak{M}}_{\bar{M}_k}\subset\frak{B}^*_{\bar{M}_k}$ in a $G$-equivariant way. This can be achieved by taking the $G$-average of any smooth Riemannian metric defined in a small
neighborhood of $\tilde{\frak{M}}_{\bar{M}_k}$.

A smooth Riemannian metric on a Hilbert manifold is a smoothly
varying bounded positive-definite symmetric bilinear forms on its tangent spaces. In
order to have a well-defined exponential map as a diffoemorphism
on a neighborhood of the origin, we want the metric to be
``strong" in the sense that the metric on each tangent space
induces the same topology as the original Hilbert space topology.
(For a proof, see \cite{kling}).

Since $\tilde{\frak{M}}_{\bar{M}_k}$ is compact, we use a
partition of unity on it to glue together obvious Hilbert space
metrics in local charts, thereby constructing a smooth Riemannian
metric in a neighborhood of $\tilde{\frak{M}}_{\bar{M}_k}$ in a
Hilbert manifold $\frak{B}^*_{\bar{M}_k}$. Taking its average under the $G$-action, we get a desired Riemannian metric, which is easily
checked to be strong.

Taking the orthogonal complement to the tangent bundle of
$\tilde{\frak{M}}_{\bar{M}_k}$ under the above-obtained metric, we
get its normal bundle which is trivial by being
infinite-dimensional. In the same way as the finite dimensional case, the inverse function theorem implies that a small neighborhood of the zero section in the normal bundle is mapped
diffeomorphically into $\frak{B}^*_{\bar{M}_k}$ by the exponential map. Thus we can view a small
neighborhood of $\tilde{\frak{M}}_{\bar{M}_k}$ as
$\tilde{\frak{M}}_{\bar{M}_k}\times \Bbb H$  where  $\Bbb H$ is
the Hilbert space isomorphic to the orthogonal complement of the
tangent space of $\tilde{\frak{M}}_{\bar{M}_k}$ at any point.

Applying the Newton method, $\Upsilon$ is pointwisely a vertical
translation along $\Bbb H$
direction. Now the assertion follows from the $G$-invariance of the normal directions.
\end{proof}

As a preparation for finding $G$-fixed points of $\tilde{\frak{M}}_{\bar{M}_k}$,
\begin{lem}\label{adam}
$\frak{M}_{\hat{N}}^{G}$ is diffeomorphic to $T^{\nu}$, the space of $G$-invariant $L^2$-harmonic 1-forms on $\hat{N}$ modulo $\Bbb Z$.
\end{lem}
\begin{proof}
Let $[\eta]\in
\frak{M}_{\hat{N}}^{G}$, i.e. $[\sigma^*\eta]=[\eta]$ for any $\sigma\in G$. Then
$$\bar{\eta}:=\frac{1}{k}\sum_{\sigma\in G}(\sigma)^*\eta$$ satisfies that $\sigma^*\bar{\eta}=\bar{\eta}$ for any $\sigma\in G$,  and $\bar{\eta}$ is cohomologous to $\eta$ so that  $[\bar{\eta}]=[\eta]$.

When $\nu\ne 0$,  let $b_1,\cdots,b_{b_1(N)}\in H_1(N;\Bbb Z)$ be a basis of $H_1(N;\Bbb R)$ such that  $b_1,\cdots,b_\nu\in H_1(N;\Bbb Z)^G$, where we used that $$\textrm{rank}(H_1(N;\Bbb Z)^{G})=\dim H_1(N;\Bbb R)^{G},$$ simply because $G$ also acts on $H_1(N;\Bbb Z)$. Also let $b_1^*,\cdots,b_{b_1(N)}^*\in H^1_{cpt}(\hat{N};\Bbb R)$ be the corresponding dual cohomology classes under the isomorphism $$H^1_{cpt}(\hat{N};\Bbb R)\simeq H_1(N;\Bbb R)^*.$$ Since
$b_i^*(b_j)=\delta_{ij}$ for all $i,j=1,,\cdots,b_1(N)$, a simple Linear algebra shows that
$b_1^*,\cdots,b_\nu^*$ are not only in $H^1_{cpt}(\hat{N};\Bbb Z)^{G}$, but also
form a basis of $H^1_{cpt}(\hat{N};\Bbb R)^{G}$. Therefore $\frak{M}_{\hat{N}}^{G}$  is a
$\nu$-dimensional torus spanned by $b_1^*,\cdots,b_\nu^*$.
When $\nu=0$, $\frak{M}_{\hat{N}}^{G}$  is a point.
\end{proof}

As an easy case,
\begin{lem}
If $G=\Bbb Z_k$, then $\tilde{\frak{M}}_{\bar{M}_k}^{\Bbb Z_k}$ is diffeomorphic to $k$ copies of $\frak{M}_{{M}}\times T^{\nu}$, where $T^0$ means a point.
\end{lem}
\begin{proof}
Let $\sigma$ be a generator of $\Bbb Z_k$, and take the numbering of elements of $G=\{\sigma_1,\cdots,\sigma_k=e\}$ such that $\sigma_i=\sigma^i$.
Thus the condition for a fixed point is that
$$(\tilde{\xi}_k(0),\tilde{\xi}_1(\theta_1),\cdots,
\tilde{\xi}_{k-1}(\theta_{k-1}),\widetilde{\sigma^*\eta})\equiv
(\tilde{\xi}_1(\theta_1),\cdots,\tilde{\xi}_{k-1}(\theta_{k-1}),\tilde{\xi}_k(0),\tilde{\eta}
)$$ modulo gauge transformations. By (\ref{general}) this implies
$$[\xi_{1}]=[\xi_{2}]=\cdots =[\xi_{k}]   \in\frak{M}_{\hat{M}},\ \textrm{and }\ [\sigma^*{\eta}]=[{\eta}]\in \frak{M}_{\hat{N}},$$
and
$$0 \equiv \theta_1+\theta,\ \ \theta_1 \equiv \theta_2+\theta,\cdots, \theta_{k-1}\equiv 0+\theta\ \ \  \textrm{mod}\ 2\pi$$
for some constant $\theta\in [0,2\pi)$. Summing up the above $k$ equations gives $$0\equiv k\theta\ \ \  \textrm{mod}\ 2\pi,$$ and hence
$$\theta=0,\frac{2\pi}{k},\cdots,\frac{2(k-1)\pi}{k},$$ which lead to the corresponding  $k$ solutions
\begin{eqnarray}\label{temp}
[(\tilde{\xi}((k-1)\theta),\tilde{\xi}((k-2)\theta),\cdots
,\tilde{\xi}(\theta) ,
\tilde{\xi}(0),\tilde{\eta})],
\end{eqnarray}
where we let $\xi_i=\xi$ for all $i$ and $[\eta]\in \frak{M}_{\hat{N}}^{\Bbb Z_k}$.
Therefore $\tilde{\frak{M}}_{M_k}^{\Bbb Z_k}$ is diffeomorphic to $k$ copies of $\frak{M}_{\hat{M}}\times\frak{M}_{\hat{N}}^{\Bbb Z_k}\simeq\frak{M}_{M}\times T^{\nu}.$
\end{proof}

\begin{lem}
If $G=\Bbb Z_k$, then $\mathfrak{X}_{\bar{M}_k}$  is diffeomorphic to $\frak{M}_{M}\times T^{\nu}$.
\end{lem}
\begin{proof}
For  $\xi\in \frak{M}_{\hat{M}}$, $\eta\in \mathfrak{X}_{\hat{N}}$, and $\theta$ as above, let  $$\tilde{\Xi}_{\theta}=(\tilde{\xi}((k-1)\theta),\tilde{\xi}((k-2)\theta),\cdots,\tilde{\xi}(\theta), \tilde{\xi}(0),\tilde{\eta}),$$  and denote $\Upsilon([\tilde{\Xi}_\theta])$ by $[\Xi_\theta]$. From the above lemma, we have that $$\sigma^*\tilde{\Xi}_\theta=e^{i\theta}\cdot\tilde{\Xi}_\theta,$$ where $\sigma$ is a generator of $\Bbb Z_k$, and $\cdot$ denotes the gauge action.

We will show that $k-1$ copies of  $\frak{M}_{M}\times T^{\nu}$ corresponding to nonzero $\theta$ do not belong to  $\mathfrak{X}_{\bar{M}_k}$.
Let $\theta=\frac{2\pi}{k},\cdots,\frac{2(k-1)\pi}{k}$. By the $\Bbb Z_k$-equivariance of $\Upsilon$,
$[\sigma^*\Xi_\theta]=\sigma^*[\Xi_\theta]$, and so write  $$\sigma^*\Xi_\theta=e^{i\vartheta}\cdot\Xi_\theta \ \ \ \textrm{for}\ e^{i\vartheta}\in Map(\bar{M}_k,S^1).$$ By taking the cylindrical length sufficiently large,  $e^{i\vartheta}$ can be made arbitrarily close to the constant $e^{i\theta}$ in a Sobolov norm and hence $C^0$-norm too by the Sobolov embedding theorem. (The Sobelev embedding constant does not change, if the cylindrical length gets large, because the local geometries remain unchanged.)


Assume to the contrary that $\sigma^*(\frak{g}\cdot\Xi_\theta)=\frak{g}\cdot\Xi_\theta$ for some $\frak{g} \in \mathcal{G}$.
Then combined with that
\begin{eqnarray*}
\sigma^*(\frak{g}\cdot\Xi_\theta)&=& \sigma^*(\frak{g})\cdot\sigma^*(\Xi_\theta)\\ &=& \sigma^*(\frak{g})\cdot (e^{i\vartheta}\cdot\Xi_\theta)\\ &=& (\sigma^*(\frak{g})e^{i\vartheta})\cdot \Xi_\theta,
\end{eqnarray*}
it follows that
$$\frak{g}\cdot \Xi_\theta = (\sigma^*(\frak{g})e^{i\vartheta})\cdot \Xi_\theta,$$
which implies that
\begin{eqnarray}\label{prayer2}
\sigma^*(\frak{g})=\frak{g}e^{-i\vartheta},
\end{eqnarray}
where we used the continuity of $g$ and the fact that the spinor part of $\alpha$ is not identically zero on an open subset by the unique continuation property.

Choose a fixed point $p\in \bar{M}_k$ under the $\Bbb
Z_k$-action.\footnote{This and the next two paragraphs are the only three places where we use the condition
that the action on $N$ has a fixed point, which was assumed in the
beginning of the proof of current theorem.} Then evaluating
(\ref{prayer2}) at the point $p$ gives $$\frak{g}(p)=\frak{g}(p)e^{-i\vartheta(p)}\approx \frak{g}(p)e^{-i\theta},$$ which yields a desired contradiction.

It remains to show that $\frak{M}_{M}\times T^{\nu}$ corresponding to $\theta=0$ belongs to $(\mathcal{A}(W_+)^G\times (\Gamma(W_+)^G-\{0\}))/ \mathcal{G}^G$.  Let $\Xi_0=\tilde{\Xi}_0+(a,\varphi)$ where $a\in \Gamma(\Lambda^1(\bar{M}_k;i\Bbb R))$ satisfies the Lorentz gauge condition $d^*a=0$. Since
$$\sigma^*\Xi_0=\tilde{\Xi}_0+(\sigma^*a,\sigma^*\varphi)$$ belongs to the same gauge equivalence class as  $\Xi_0$, and $$d^*(\sigma^*a)=\sigma^*(d^*a)=0$$ using the isometric action of $G$, we have that $\sigma^*a\equiv a$ modulo $H^1(\bar{M}_k;\Bbb Z)=\Bbb Z^{b_1(\bar{M}_k)}$. Applying the obvious identity $(\sigma^*)^k=\textrm{Id}$, it follows that $\sigma^*a=a$. This implies that $\sigma^*\Xi_0$ is a constant gauge transform $e^c\cdot \Xi_0$ of $\Xi_0$. If $e^c\ne 1$, it leads to a contradiction by the same method as above using the existence of a fixed point. Therefore $\sigma^*\Xi_0=\Xi_0$ as desired, and we conclude that  $\mathfrak{X}_{\bar{M}_k}$ is equal to $\frak{M}_{M}\times T^{\nu}$ .
\end{proof}

Now we will consider the case of any finite group $G$. We will show that $\mathfrak{X}_{\bar{M}_k}$ is diffeomorphic to $\Upsilon(S)$ where
$$S:=\{[(\tilde{\xi}(0),\cdots , \tilde{\xi}(0),\tilde{\xi}(0), \tilde{\eta})]\ |\ [\eta]\in \frak{M}_{\hat{N}}^G,   [\xi]\in \frak{M}_{\hat{M}}, \theta_i\in [0,2\pi)\ \forall i
\}.
$$
Since $(\tilde{\xi}(0),\cdots , \tilde{\xi}(0),\tilde{\xi}(0), \tilde{\eta})$ is $G$-invariant, $\Upsilon([(\tilde{\xi}(0),\cdots , \tilde{\xi}(0),\tilde{\xi}(0), \tilde{\eta})])$ is also represented by a $G$-invariant element by the same method as the above paragraph using the existence of a fixed point. Hence $\Upsilon(S)\subset \mathfrak{X}_{\bar{M}_k}$.


To show the reverse inclusion, first note that any element of  $\mathfrak{X}_{\bar{M}_k}$ can be written as $\Upsilon([(\tilde{\xi}(\theta_1),\cdots , \tilde{\xi}(\theta_{k-1}),\tilde{\xi}(0), \tilde{\eta})])$ for $[\xi]\in \frak{M}_{\hat{M}}$. We only need to show all $\theta_i$ are zero, and $[\eta]\in \frak{M}_{\hat{N}}^G$. For  $\sigma_i\in G$, let $\langle\sigma_i\rangle$ be the cyclic subgroup generated by $\sigma_i$, and $\mathfrak{X}_{\bar{M}_k,\langle\sigma_i\rangle}$  be the $\langle\sigma_i\rangle$-monopole moduli space. Since $\mathfrak{X}_{\bar{M}_k}$ is a subset of $\mathfrak{X}_{\bar{M}_k,\langle\sigma_i\rangle}\subset \frak{M}_{\bar{M}_k}$, we can use the above lemma to deduce that $\theta_i$ is 0, and $[\eta]\in \frak{M}_{\hat{N}}^{\langle\sigma_i\rangle}$. Since $i$ is arbitrary, we get a desired conclusion.

Finally let's prove the theorem when the action on $N$ is free. In this case, directly from Theorem \ref{nakamur} and the gluing theory, we have diffeomorphisms
\begin{eqnarray*}
\frak{X}_{\bar{M}_k,\bar{\mathfrak{s}}}&=& \mathfrak{M}_{M\# N/G,\mathfrak{s}\# \frak{s}_N'}\\
 &\simeq& \mathfrak{M}_{M,\mathfrak{s}}\times \mathfrak{M}^{red}_{N/G, \frak{s}_N'}\\
 &\simeq& \mathfrak{M}_{M,\mathfrak{s}}\times T^\nu,
\end{eqnarray*}
where $\frak{s}_N'$ is the Spin$^c$ structure on $N/G$ induced from $\frak{s}_N$ and its $G$ action induced from that of $\bar{\frak{s}}$. This completes all the proof.
\end{proof}

Now we come to the main theorem which implies Theorem \ref{firstth}.
\begin{thm}\label{myLord}
Let $(\bar{M}_k,\bar{\frak{s}})$ be as in Theorem \ref{firstth} and $d\geq 0$ be an integer.

If $\nu:=\dim H_1(N;\Bbb R)^{G}=0$, then for $A=1$ or
$a_1\wedge\cdots\wedge a_{j}$
$$SW^{G}_{\bar{M}_k,\bar{\frak{s}}}(U^d A)\equiv
SW_{M,\frak{s}}(U^d A)\ \ \ mod \ 2,$$ where $U$ denotes the
positive generator of the zeroth homology of $\bar{M}_k$ or $M$,
and each $a_i\in H_1(M;\Bbb Z)/\textrm{torsion}$ also denotes any
of $k$ corresponding elements in $H_1(\bar{M}_k;\Bbb Z)$ by abuse
of notation.

If $\nu\ne 0$, then
$$SW^{G}_{\bar{M}_k,\bar{\frak{s}}}(U^d A\wedge b_1\wedge\cdots\wedge b_\nu)\equiv
SW_{M,\frak{s}}(U^d A)\ \ \ mod \ 2,$$ where $A$ is as above, and
$b_1,\cdots, b_\nu\in H_1(N;\Bbb Z)$ is a basis of $H_1(N;\Bbb R)^{G}$.
\end{thm}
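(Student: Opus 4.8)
The plan is to read off both congruences directly from the diffeomorphism
$$\Psi : \frak{M}_{M,\frak{s}}\times T^{\nu}\ \tilde{\rightarrow}\ \mathfrak{X}_{\bar{M}_k,\bar{\frak{s}}}$$
established in the preceding theorem, by tracing how each universal cohomology class $\mu_{\bar{M}_k,\bar{\frak{s}}}(U)$, $\mu_{\bar{M}_k,\bar{\frak{s}}}(a_i)$, and $\mu_{\bar{M}_k,\bar{\frak{s}}}(b_i)$ restricts to $\mathfrak{X}$ and pulls back under $\Psi$. Since $\Upsilon$ is by construction a small isotopy along normal directions inside $\frak{B}^*_{\bar{M}_k}$, it is homotopic to the inclusion $\tilde{\frak{M}}_{\bar{M}_k}\hookrightarrow \frak{B}^*_{\bar{M}_k}$, so each universal class may be computed on the explicit approximate moduli space $\tilde{\frak{M}}_{\bar{M}_k}$ using the $G$-invariant representatives $(\tilde{\xi}(0),\cdots,\tilde{\xi}(0),\tilde{\eta})$ with $[\xi]\in\frak{M}_{\hat M}$ and $[\eta]\in\frak{M}_{\hat N}^G\simeq T^\nu$ found in the previous proof. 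Throughout I abbreviate the universal-class isomorphisms as $\mu_M$ and $\mu_{\bar M_k}$, and write $\mathrm{pr}_1,\mathrm{pr}_2$ for the two projections of $\frak{M}_M\times T^\nu$.

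Next I would establish the three pullback identities. For a loop $c$ representing a class $a_i\in H_1(M)$ sitting in any one of the $k$ copies of $M$, the holonomy $Hol_c$ of the grafted configuration sees only the piece $\tilde{\xi}(0)$ lying in that summand; since all $k$ summand-solutions coincide on $\mathfrak{X}$, this equals $Hol_c$ of $\xi$ on $\hat M$, giving $\Psi^*\mu_{\bar M_k}(a_i)=\mathrm{pr}_1^*\mu_M(a_i)$ independently of which of the $k$ corresponding classes is chosen. For a loop $b_i$ in $N$, the holonomy sees only $\tilde{\eta}=\rho\eta_0+\sum_j c_j\beta_j$, and by the duality $b_i^*(b_j)=\delta_{ij}$ the map $Hol_{b_i}$ is, up to a constant shift, the projection $T^\nu\to S^1$ onto the $i$-th coordinate; hence $\Psi^*\mu_{\bar M_k}(b_i)=\mathrm{pr}_2^*(dc_i)$, the $i$-th generator of $H^1(T^\nu)$, so that $\Psi^*\mu_{\bar M_k}(b_1\wedge\cdots\wedge b_\nu)=\mathrm{pr}_2^*(dc_1\wedge\cdots\wedge dc_\nu)$ is the pullback of the top class of $T^\nu$. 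Finally, choosing the base point $o$ inside the first $M$-summand, the based $S^1$-bundle $\mathcal{B}^*_{o,\bar{M}_k}$ restricted to $\mathfrak{X}$ should be isomorphic to $\mathrm{pr}_1^*\mathcal{B}^*_{o,M}$, which yields $\Psi^*\mu_{\bar M_k}(U)=\mathrm{pr}_1^*\mu_M(U)$; this is compatible with the based moduli space $\frak{M}^o_{\hat M}$ of the first summand appearing in the gluing description.

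With these identities in hand the computation is immediate. When $\nu=0$ we have $T^\nu=\{\mathrm{pt}\}$ and $\Psi:\frak{M}_M\ \tilde{\rightarrow}\ \mathfrak{X}$, so that $SW^G_{\bar M_k}(U^dA)=\langle[\mathfrak{X}],\mu_{\bar M_k}(U^dA)\rangle=\langle[\frak{M}_M],\mu_M(U^dA)\rangle=SW_M(U^dA)$; the degree condition matches on both sides, since $\mu(U^dA)$ has degree $2d+j$ and the pairing is nonzero only when this equals $\dim\frak{M}_M$, both sides vanishing otherwise. When $\nu\ne 0$, the class $\mu_{\bar M_k}(U^dA\wedge b_1\wedge\cdots\wedge b_\nu)$ pulls back to $\mathrm{pr}_1^*\mu_M(U^dA)\smile\mathrm{pr}_2^*(dc_1\wedge\cdots\wedge dc_\nu)$, whose degree $2d+j+\nu$ matches $\dim\mathfrak{X}=\dim\frak{M}_M+\nu$, so the pairing factors as
$$\langle[\frak{M}_M],\mu_M(U^dA)\rangle\cdot\langle[T^\nu],dc_1\wedge\cdots\wedge dc_\nu\rangle=SW_M(U^dA)\cdot(\pm 1),$$
and reducing mod $2$ removes the sign to give the claimed congruence.

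The main obstacle is the $U$-class identity $\Psi^*\mu_{\bar M_k}(U)=\mathrm{pr}_1^*\mu_M(U)$. Unlike the holonomy classes, which are manifestly local and are read off a single loop, the first Chern class of the based $S^1$-bundle involves the \emph{global} based gauge group $\mathcal{G}_o$ on $\bar{M}_k$, and one must verify that localizing the base point to a single $M$-summand genuinely splits this bundle as the $\mathrm{pr}_1$-pullback of the corresponding bundle on $\frak{M}_M$, in a manner compatible with the grafting and with $\Upsilon$. Everything else is bookkeeping with the K\"unneth product, together with the elementary observation that $dc_1\wedge\cdots\wedge dc_\nu$ pairs with $[T^\nu]$ to $1$ mod $2$, which is precisely why the statement is only an equality modulo $2$ rather than over $\Bbb Z$.
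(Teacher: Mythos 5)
Your proposal is correct and is essentially the paper's own argument. The three pullback identities you set out to prove are exactly the content of the paper's Lemma \ref{LHW}: the classes $\mu(a_i)$ and $\mu(b_i)$ are handled by the same local holonomy argument (loop representatives chosen away from the gluing regions, so that $Hol_{a_i}$ and $Hol_{b_i}$ read off only $\tilde{\xi}(0)$ and $\tilde{\eta}$ respectively), and the class $\mu(U)$ --- which you rightly isolate as the delicate global point --- is treated in the paper by showing that the $S^1$-fibrations induced by the $\mathcal{G}/\mathcal{G}_o$ action on $\frak{M}_{\hat{M}}\times T^\nu$ and on $\tilde{\frak{M}}_{\bar{M}_k}^{G}$ are isomorphic, with the isotopy $\Upsilon$ extending to these fibrations; this is precisely the verification you describe as the remaining obstacle. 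The evaluation step is also identical: $b_i^*(b_j)=\delta_{ij}$ makes $\mu(b_1)\wedge\cdots\wedge\mu(b_\nu)$ the generator of $H^\nu(T^\nu;\Bbb Z)$, and reduction mod $2$ absorbs the orientation ambiguity (the paper's remark conjectures integral equality for suitable homology orientations).

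The one genuine omission is the case where the $G$-action on $N$ is free, which Theorem \ref{firstth} explicitly allows. Your computation rests on the grafted $G$-invariant representatives $(\tilde{\xi}(0),\cdots,\tilde{\xi}(0),\tilde{\eta})$, but the previous proof established that description of $\mathfrak{X}_{\bar{M}_k}$ only when the action has a fixed point --- the fixed point was used three times there to rule out the twisted components with $\theta\neq 0$, and in the free case those components genuinely survive in $\frak{M}_{\bar{M}_k}^{G}$, corresponding via Theorem \ref{nakamur} to the Spin$^c$ structures $\frak{s}'+c$ downstairs. The paper disposes of the free case in one line through the identification $\frak{X}_{\bar{M}_k,\bar{\frak{s}}}=\frak{M}_{M\#N/G,\frak{s}\#\frak{s}_N'}$, after which the ordinary (non-equivariant) version of your $\mu$-class bookkeeping applies to the connected sum $M\#N/G$ with $b_2^+(N/G)=0$ and $b_1(N/G)=\nu$. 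You should add that case split; everything else matches the paper.
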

\begin{proof}
As before, let's first consider the case when the action has a fixed point. We continue to use the same notation and context as the previous theorem.
\begin{lem}\label{LHW}
The $\mu$ cocycles on $\frak{M}_{{M}}\times T^\nu$ and  $\mathfrak{X}_{\bar{M}_k}$ coincide, i.e.
$$\mu_{M}(a_i)=\mu_{\bar{M}_k}(a_i),\ \ \ \  \mu_{N}(b_i)=\mu_{\bar{M}_k}(b_i),\ \ \ \ \mu_{M}(U)=\mu_{\bar{M}_k}(U)$$  where the equality means the identification under the above diffeomorphism.
\end{lem}
\begin{proof}
The first equality comes from that the holonomy
maps $Hol_{a_i}$ defined on $\frak{M}_{{M}}$ and
$\tilde{\frak{M}}_{\bar{M}_k}^{G}$ are just the same, when
the representative of $a_i$ is  chosen away from the gluing
regions.  Using the isotopy between  $\frak{M}_{\bar{M}_k}^{G}$ and $\tilde{\frak{M}}_{\bar{M}_k}^{G}$, the induced maps $Hol^*_{a_i}$ from $H^1(S^1;\Bbb Z)$ to $H^1(\frak{M}_{{M}};\Bbb Z)$ and
$H^1(\frak{M}_{\bar{M}_k}^{G};\Bbb Z)$ are the same so that
$$\mu_{M}(a_i)=Hol^*_{a_i}([d\theta])=\mu_{\bar{M}_k}(a_i)$$
for each $i$. Likewise for the second equality.

For the third equality, note that the $S^1$-fibrations on
$\frak{M}_{\hat{M}}\times T^\nu$ and
$\tilde{\frak{M}}_{\bar{M}_k}^{G}$ induced by  the
$\mathcal{G}/\mathcal{G}_o$ action are isomorphic in an obvious
way, where the $T^\nu$ part is fixed under the
$\mathcal{G}/\mathcal{G}_o$ action. Since the isotopy between
$\tilde{\frak{M}}_{\bar{M}_k}$ and $\frak{M}_{\bar{M}_k}$ can be
extended to the $S^1$-fibrations  induced by  the
$\mathcal{G}/\mathcal{G}_o$ action, those $S^1$-fibrations are
isomorphic. In the same way using gluing theory, there are
isomorphisms of $S^1$-fibraions on  $\frak{M}_{M}$, its approximate moduli space
$\tilde{\frak{M}}_{M}$, and $\frak{M}_{\hat{M}}$. Therefore we
have an isomorphism between those $S^1$-fibrations on
$\frak{M}_{M}\times T^\nu$ and $\mathfrak{X}_{\bar{M}_k}$.
\end{proof}

We are ready for the evaluation of the Seiberg-Witten invariant
on $\mathfrak{X}_{\bar{M}_k}$.  Suppose $\nu\ne 0$. Let
$l_1,\cdots,l_{b_1(N)}$ be loops representing homology classes $b_1,\cdots,b_{b_1(N)}$
respectively.  Then $b_i^*$ introduced in Lemma \ref{adam}
restricts to a nonzero element  of
$H^1(l_j;\Bbb Z)$ iff $i=j$. Moreover $b_i^*$ is a generator of $H^1(l_j;\Bbb Z)$, and hence
$\{\mu(b_1),\cdots,\mu(b_\nu)\}$ is a standard generator of the 1st
cohomology of $T^\nu\simeq \Bbb R\langle
b_1^*,\cdots,b_\nu^*\rangle/\Bbb Z\langle
b_1^*,\cdots,b_\nu^*\rangle$. Combining the fact that $\mu(b_1)\wedge
\cdots \wedge \mu(b_{\nu})$ is a generator of $H^\nu(T^{\nu};\Bbb
Z)$ with the above identification of $\mu$-cocycles, we can conclude that
$$SW^{G}_{\bar{M}_k,\bar{\frak{s}}}(U^dA\wedge
b_1\wedge\cdots\wedge b_\nu)\equiv SW_{M,\frak{s}}(U^dA)\ \ \
\textrm{mod}\ 2$$ for $A=1$ or $a_1\wedge\cdots\wedge a_j$. The
case of $\nu=0$ is just a special case.

When the action is free, the theorem is obvious from the identification $\frak{X}_{\bar{M}_k,\bar{\mathfrak{s}}}=\mathfrak{M}_{M\# N/G,\mathfrak{s}\# \frak{s}_N'}$.
\end{proof}

\begin{rmk}
If the diffeomorphism between $\frak{X}_{\bar{M}_k}$ and $\mathfrak{M}_{M}\times T^\nu$
is orientation-preserving, then $G$-monopole invariants and Seiberg-Witten invariants are exactly the same.
We conjecture that the diffeomorphism between $\frak{X}_{\bar{M}_k}$  and $\frak{M}_{M}\times T^{\nu}$ is orientation-preserving, when the homology orientations are appropriately chosen.

One may try to prove $\frak{X}_{\bar{M}_k}\simeq \frak{M}_{M}\times T^{\nu}$ by gluing $G$-monopole moduli spaces directly. But the above method of proof by gluing ordinary moduli spaces also shows that for $G=\Bbb Z_k$, $\frak{M}_{\bar{M}_k}^{\Bbb Z_k}$ is diffeomorphic to $k$ copies of $\frak{M}_{M}\times T^{\nu}$. Lemma \ref{LHW} is also true for any other component of $\frak{M}_{\bar{M}_k}^{\Bbb Z_k}$.
\end{rmk}


\section{Examples of $(N,\frak{s}_N)$ of Theorem \ref{firstth}}

In this section,  $G, H$ and $K$ denote compact Lie groups. Let's  recall some elementary facts on equivariant principal bundles.
\begin{defn}
A principal $G$ bundle $\pi : P \rightarrow M$ is said to be $K$-equivariant if $K$ acts left on
both $P$ and $M$ in such a way that

(1) $\pi$ is $K$-equivariant :
$$\pi(k\cdot p) = k\cdot\pi(p)$$ for all $k\in K$ and $p\in P$,

(2) the left action of $K$ commutes with the right action of $G$ :
$$k\cdot(p\cdot g) = (k\cdot p)\cdot g$$ for all $k\in K, p\in P$, and $g\in G$.
\end{defn}
If $H$ is a normal subgroup of $G$, then one can define a principal $G/H$ bundle $P/H$ by taking the fiberwise quotient of $P$ by $H$. Moreover if $P$ is $K$-equivariant under a left $K$ action, then there exists the induced $K$ action on $P/H$ so that $P/H$ is $K$-equivariant.

\begin{lem}\label{jacob}
Let $P$ and  $\tilde{P}$ be a principal $G$ and $\tilde{G}$ bundle
respectively over a smooth manifold $M$ such that $\tilde{P}$
double-covers $P$ fiberwisely. For a normal subgroup $H$
containing $\Bbb Z_2$ in both $\tilde{G}$ and $S^1$ where the
quotient of $\tilde{G}$ by that $\Bbb Z_2$ gives $G$, let
$$\tilde{P}\otimes_{H}S^1:=(\tilde{P}\times_M (M\times S^1))/ H$$
be the quotient of the fiber product of $\tilde{P}$ and the
trivial $S^1$ bundle $M\times S^1$ by $H$, where the right $H$
action is given by $$(p,(x,e^{i\vartheta}))\cdot h=(p\cdot h,
(x,e^{i\vartheta}h^{-1})).$$

Suppose that $M$ and $P$ admit a smooth $S^1$ action such that $P$
is $S^1$-equivariant.  Then a principal $\tilde{G}\otimes_{H}S^1$
bundle $\tilde{P}\otimes_{H}S^1$ is also $S^1$-equivariant by
lifting the action on $P$. In particular, any smooth $S^1$-action
on a smooth spin manifold lifts to its trivial Spin$^c$ bundle so
that the Spin$^c$ structure is $S^1$-equivariant.
\end{lem}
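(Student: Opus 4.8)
The plan is to exploit the fact that, although the given circle action on $P$ need not lift to the double cover $\tilde P$, its pullback to the universal cover $\Bbb R$ always does, and the auxiliary $S^1$-factor in $\tilde P\otimes_H S^1$ provides exactly the freedom needed to absorb the resulting $\Bbb Z_2$-monodromy. Throughout I assume $M$ connected (as in the application; the general case is handled component by component).

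First I would record the obstruction. Write the acting circle as $\Bbb R/2\pi\Bbb Z$ and let $a_s:P\to P$, covering $g_s:M\to M$, denote the action. Since $\Bbb R$ is simply connected, the lifting criterion for the fiberwise double cover $\pi:\tilde P\to P$ is vacuous, so the map $\Bbb R\times\tilde P\to P$, $(s,p)\mapsto a_s(\pi(p))$, lifts through $\pi$ to a smooth map $\Psi:\Bbb R\times\tilde P\to\tilde P$ with $\Psi_0=\mathrm{id}$; uniqueness of lifts then makes $\Psi$ a genuine $\Bbb R$-action. A second application of uniqueness, using that $\tilde G$ is connected and that $a_s$ commutes with the right $G$-action (condition (2) in the definition of equivariance), shows each $\Psi_s$ is $\tilde G$-equivariant, i.e.\ $\Psi_s(p\cdot\tilde g)=\Psi_s(p)\cdot\tilde g$, since both sides are lifts through $\pi$ of the same map $\tilde G\to P$ and agree at $\tilde g=e$. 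Because $a_{2\pi}=\mathrm{id}_P$, the automorphism $\Psi_{2\pi}$ covers the identity of $M$ and is $\tilde G$-equivariant, hence is multiplication by a locally constant element $\epsilon\in\ker(\tilde G\to G)=\Bbb Z_2$; on connected $M$ this $\epsilon\in\{\pm1\}$ is the monodromy, nontrivial precisely when the action fails to lift to $\tilde P$.

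Next I would build the action on $Q:=\tilde P\otimes_H S^1$ by twisting the $S^1$-factor. Set $c=0$ if $\epsilon=+1$ and $c=\tfrac12$ if $\epsilon=-1$, and define an $\Bbb R$-action on $\tilde P\times_M(M\times S^1)$ by $\Theta_s(p,(x,\lambda))=(\Psi_s(p),(g_s x,e^{ics}\lambda))$. The $\tilde G$-equivariance of $\Psi_s$ together with $H\subset\tilde G$ and $H\subset S^1$ gives $\Theta_s((p,(x,\lambda))\cdot h)=\Theta_s(p,(x,\lambda))\cdot h$ for all $h\in H$, so $\Theta$ descends to an $\Bbb R$-action on $Q$; the same computation shows it commutes with the right $\tilde G\otimes_H S^1$-action, so it acts by principal-bundle automorphisms covering $g_s$ on $M$. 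The point of the twist is that $\Theta_{2\pi}(p,(x,\lambda))=(p\cdot\epsilon,(x,e^{2\pi ic}\lambda))$ equals $(p,(x,\lambda))\cdot h$, with $h$ the generator of the common $\Bbb Z_2\subset H$ when $\epsilon=-1$, and is already the identity when $\epsilon=+1$; in either case $\Theta_{2\pi}$ descends to the identity of $Q$. Hence $\Theta$ factors through $\Bbb R/2\pi\Bbb Z$ and yields the desired smooth $S^1$-action making $Q$ an $S^1$-equivariant principal $\tilde G\otimes_H S^1$-bundle.

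The main obstacle is conceptual rather than computational: the action genuinely may fail to lift to $\tilde P$, so the crux is the half-angle twist $e^{is/2}$ on the auxiliary $S^1$-factor, the one device that cancels a nontrivial $\Bbb Z_2$-monodromy while still descending to $Q$. For the final assertion I would take $M$ spin, average any metric to an $S^1$-invariant one so that the derivative action lifts the circle action to the oriented orthonormal frame bundle $P=P_{SO}$, let $\tilde P=P_{\mathrm{Spin}}$, $\tilde G=\mathrm{Spin}(n)$, $G=SO(n)$, $H=\Bbb Z_2$, and $\mathrm{Spin}^c(n)=\mathrm{Spin}(n)\otimes_{\Bbb Z_2}S^1$; applying the lemma with the trivial $S^1$-bundle produces the $S^1$-equivariant (trivial-determinant) Spin$^c$ structure.
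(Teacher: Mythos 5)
Your proposal is correct and is essentially the paper's own argument made rigorous: the paper likewise lifts the action locally to $\tilde P$, observes the obstruction is a $\Bbb Z_2$ monodromy (uniform since the base is connected), and cancels it by the same half-angle rotation $e^{i\vartheta/2}$ on the trivial $S^1$-factor, which the paper phrases as an ``ill-defined'' $S^1$-action with $\Bbb Z_2$ monodromy rather than, as you do, an honest $\Bbb R$-action $\Theta$ with $\Theta_{2\pi}$ equal to the $H$-action of the common $\Bbb Z_2$ generator. Your formalization via the pulled-back $\Bbb R$-action is a clean packaging of the same idea (your appeal to connectedness of $\tilde G$ for equivariance of $\Psi_s$ is harmless and avoidable by running the uniqueness-of-lifts argument in the $(s,p)$-variables instead), and your treatment of the spin case matches the paper's.
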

\begin{proof}
Any left $S^1$ action on $P$ can be lifted to $\tilde{P}$ uniquely
at least locally commuting with the right $\tilde{G}$ action. If
the monodromy is trivial for any orbit, then the $S^1$ action can
be globally well-defined on $\tilde{P}$, and hence on
$\tilde{P}\otimes_{H}S^1$, where the $S^1$ action on the latter
$S^1$ fiber can be any left action, e.g. the trivial action,
commuting with the right $S^1$ action.

If the monodromy is not trivial, it has to be $\Bbb Z_2$ for any
orbit,  because the orbit space is connected. In that case, we
need the trivial $S^1$ bundle $M\times S^1$ with an ``ill-defined"
$S^1$ action with monodromy $\Bbb Z_2$ defined as follows.

First consider the double covering map from $M\times S^1$ to
itself defined by  $(x,z)\mapsto (x,z^2)$. Equip the downstairs
$M\times S^1$ with the left $S^1$ action which acts on the base as
given and on the fiber $S^1$ by the  multiplication as complex
numbers. Then this downstairs action can be locally lifted to the
upstairs commuting with the right $S^1$ action. Most importantly,
it has $\Bbb Z_2$ monodromy as desired. Explicitly,
$e^{i\vartheta}$ for $\vartheta\in[0,2\pi)$ acts on the fiber
$S^1$ by the  multiplication of $e^{i\frac{\vartheta}{2}}$.
Combining this with the local action on $\tilde{P}$, we get a
well-defined $S^1$ action on $\tilde{P}\otimes_{H}S^1$, because
two $\Bbb Z_2$ monodromies are cancelled each other.

Once the $S^1$ action on $\tilde{P}\otimes_{H}S^1$ is globally well-defined, it commutes with the right $\tilde{G}\otimes_{H}S^1$  action, because the local $S^1$ action on $\tilde{P}\times S^1$ commuted with the right $\tilde{G}\times S^1$ action.

If $S^1$ acts on a smooth manifold, the orthonormal frame bundle is always $S^1$-equivariant under the action. Then by the above result  any $S^1$ action on a smooth spin manifold lifts to  the trivial Spin$^c$ bundle which is $(\textrm{spin bundle})\otimes_{\Bbb Z_2}S^1$.
\end{proof}

\begin{lem}\label{joseph}
Let $P$ be a flat principal $G$ bundle over a smooth manifold $M$ with a smooth $S^1$ action. Suppose that the action can be lifted to the universal cover $\tilde{M}$ of $M$. Then it can be also   lifted to $P$ so that $P$ is $S^1$-equivariant.
\end{lem}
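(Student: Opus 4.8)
The plan is to use the holonomy description of a flat bundle. Since $P$ is flat, after fixing a basepoint let $\rho:\pi_1(M)\to G$ be its holonomy representation, so that $P$ is isomorphic to the associated bundle $(\tilde{M}\times G)/\pi_1(M)$, in which $\pi_1(M)$ acts on the universal cover $\tilde{M}$ by deck transformations and on $G$ through $\rho$, the right $G$-action is $[\tilde{x},g]\cdot h=[\tilde{x},gh]$, and the bundle projection is $[\tilde{x},g]\mapsto[\tilde{x}]$. My aim is to produce an $S^1$-action on $\tilde{M}\times G$ that descends to this quotient and commutes with the right $G$-action, since such an action is exactly what makes $P$ an $S^1$-equivariant principal bundle.

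By hypothesis the $S^1$-action $\phi$ on $M$ lifts to a smooth $S^1$-action $\tilde\phi$ on $\tilde{M}$. The structural fact I need is that $\tilde\phi$ commutes with every deck transformation. I would argue this as follows: for $\gamma\in\pi_1(M)$, the map $c_t(\gamma):=\tilde\phi_t\circ\gamma\circ\tilde\phi_t^{-1}$ covers $\phi_t\circ\mathrm{id}\circ\phi_t^{-1}=\mathrm{id}$ on $M$, hence is itself a deck transformation. As $t$ varies it gives a continuous path in the discrete group $\pi_1(M)$; since $S^1$ is connected, $c_t(\gamma)$ is constant, and evaluating at $t=0$ yields $c_0(\gamma)=\gamma$. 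Therefore $\tilde\phi_t\circ\gamma=\gamma\circ\tilde\phi_t$ for all $t$ and all $\gamma$.

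With this commutation established, I would define the candidate action by $e^{it}\cdot[\tilde{x},g]=[\tilde\phi_t(\tilde{x}),g]$. Well-definedness on the quotient is immediate: replacing the representative $(\tilde{x},g)$ by $(\gamma\tilde{x},\rho(\gamma)g)$ yields $[\tilde\phi_t(\gamma\tilde{x}),\rho(\gamma)g]=[\gamma\,\tilde\phi_t(\tilde{x}),\rho(\gamma)g]=[\tilde\phi_t(\tilde{x}),g]$, using commutation and then the defining relation of the quotient. The remaining axioms are then routine: the formula is a left $S^1$-action because $\tilde\phi$ is one, it commutes with the right $G$-action since $G$ acts only on the second factor, and $\pi$ is $S^1$-equivariant because $\tilde\phi$ covers $\phi$; smoothness descends from the smoothness of $\tilde\phi$ and of the covering projection.

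The only substantive ingredient is the commutation of $\tilde\phi$ with deck transformations, and that in turn is forced by the connectedness of $S^1$ together with the discreteness of $\pi_1(M)$. Hence I anticipate no real obstacle: the work is to set up the associated-bundle model carefully and to record the rigidity of $c_t(\gamma)$, after which the verification of equivariance is bookkeeping.
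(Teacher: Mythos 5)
Your proof is correct and follows essentially the same route as the paper: trivialize the flat bundle over the universal cover $\tilde{M}$, let $S^1$ act trivially on the $G$-factor, and descend through the $\pi_1(M)$-quotient determined by the holonomy representation $\rho\in\textrm{Hom}(\pi_1(M),G)$. The only point where you go beyond the paper's write-up is in explicitly proving that the lifted action $\tilde\phi$ commutes with deck transformations via the rigidity argument ($c_t(\gamma)$ is a continuous family in the discrete deck group, hence constant), a step the paper uses implicitly when asserting the action projects down; your argument for it is correct.
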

\begin{proof}
For the covering map $\pi: \tilde{M}\rightarrow M$, the pull-back
bundle $\pi^*P$ is the trivial bundle $\tilde{M}\times G$.  By
letting $S^1$ act on the fiber $G$ trivially, $\pi^*P$ can be made
$S^1$-equivariant. For the deck transformation group $\pi_1(M)$,
$P$ is gotten by an element of  $\textrm{Hom}(\pi_1(M),G)$. Any
deck transformation acts on each fiber $G$  as the left
multiplication of a constant in $G$ so that it commutes with not
only the right $G$ action but also the left $S^1$ action which is
trivial on the fiber $G$. Therefore the $S^1$ action on $\pi^*P$
projects down to an $S^1$ action on $P$. To see whether this $S^1$
action commutes with the right $G$ action, it's enough to check
for the local $S^1$ action, which can be seen upstairs on
$\pi^*P$.
\end{proof}

\begin{lem}
On a smooth closed oriented 4-manifold $N$ with $b_2^+(N)=0$, any  Spin$^c$ structure $\frak{s}$ satisfies $$c_1^2(\frak{s})\leq -b_2(N),$$ and the choice of a Spin$^c$ structure $\frak{s}_N$ satisfying $c_1^2(\frak{s}_N)=-b_2(N)$ is always possible.
\end{lem}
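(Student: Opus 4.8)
The plan is to extract the geometric content from the hypothesis $b_2^+(N)=0$, which forces the intersection form $Q_N$ on the free part $H^2(N;\Bbb Z)/\textrm{torsion}$ to be negative-definite of rank $b_2(N)$. Since $c_1^2(\frak{s})=\langle c_1(\frak{s})\cup c_1(\frak{s}),[N]\rangle$ depends only on the image of $c_1(\frak{s})$ in this free quotient, I may work entirely with the unimodular lattice $(H^2(N;\Bbb Z)/\textrm{torsion},\, Q_N)$; the torsion plays no role.

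The key external input will be Donaldson's diagonalization theorem: a smooth closed oriented $4$-manifold with negative-definite intersection form has $Q_N$ isomorphic over $\Bbb Z$ to the standard diagonal form $\textrm{diag}(-1,\dots,-1)$. Fixing such a basis $e_1,\dots,e_{b_2(N)}$, I will use the standard fact (see e.g. \cite{morgan}) that every oriented $4$-manifold admits a Spin$^c$ structure, and that $\frak{s}\mapsto c_1(\frak{s})$ surjects onto the set of characteristic vectors of $Q_N$, i.e. those $c$ with $c\cdot x\equiv x\cdot x \pmod 2$ for all $x$. In the diagonal basis, $c=\sum a_i e_i$ is characteristic precisely when every coordinate $a_i$ is odd, since $x\cdot x\equiv \sum_i x_i \pmod 2$ there.

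Granting this, the inequality is immediate: for any Spin$^c$ structure, write $c_1(\frak{s})=\sum_i a_i e_i$ with all $a_i$ odd, so that $c_1^2(\frak{s})=-\sum_i a_i^2\le -\sum_i 1=-b_2(N)$, using $a_i^2\ge 1$. For the existence claim I would take the characteristic vector $c=\sum_i e_i$, all of whose coordinates equal $1$; it is realized as $c_1(\frak{s}_N)$ for some Spin$^c$ structure $\frak{s}_N$ by the surjectivity above, and $c_1^2(\frak{s}_N)=-b_2(N)$, attaining equality.

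The only substantial ingredient is Donaldson's theorem; everything else is elementary lattice arithmetic together with the correspondence between Spin$^c$ structures and characteristic vectors. I note that the inequality alone could instead be deduced from Elkies' theorem on characteristic vectors of definite unimodular lattices, which gives $c_1^2(\frak{s})\le -b_2(N)$ without diagonalization; however, the sharpness of the bound genuinely requires knowing that $Q_N$ is the standard form, so invoking Donaldson's theorem remains the essential point. The main thing to handle carefully is the presence of torsion in $H^2(N;\Bbb Z)$, but since $c_1^2$ factors through the free quotient this causes no difficulty.
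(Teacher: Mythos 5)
Your proof is correct and follows essentially the same route as the paper: Donaldson diagonalization of the negative-definite form, the parity constraint forcing all coordinates of the characteristic vector $c_1(\frak{s})$ to be odd (hence $c_1^2=-\sum a_i^2\le -b_2(N)$), and realization of the all-ones vector for equality, where your appeal to surjectivity onto characteristic vectors is the same fact the paper uses when tensoring with a line bundle $L$ satisfying $2c_1(L)+c_1(\frak{s})\equiv\sum_i\alpha_i$ modulo torsion. The only difference is cosmetic: your aside that Elkies' theorem would give the inequality without diagonalization is a nice observation, but as you note, the equality case still requires Donaldson's theorem, so nothing is saved.
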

\begin{proof}
If $b_2(N)=0$, it is obvious. The case of $b_2(N)>0$ can be seen
as follows.  Using Donaldson's theorem \cite{donal1,donal2}, we
diagonalize the intersection form $Q_N$ on $H^2(N;\Bbb
Z)/\textrm{torsion}$ over $\Bbb Z$ with a basis
$\{\alpha_1,\cdots,\alpha_{b_2(N)}\}$ satisfying
$Q_N(\alpha_i,\alpha_i)=-1$ for all $i$. Then for any Spin$^c$
structure $\frak{s}$, the rational part of $c_1(\frak{s})$ should
be of the form $$\sum_{i=1}^{b_2(N)}a_i\alpha_i$$ where each
$a_i\equiv 1$ mod 2, because $$Q_N(c_1(\frak{s}),\alpha)\equiv
Q_N(\alpha,\alpha)\ \ \ \ \textrm{mod}\ 2$$ for any $\alpha\in
H^2(N;\Bbb Z)$. Consequently $|a_i|\geq 1$ for all $i$ which means
$$c_1^2(\frak{s})=\sum_{i=1}^{b_2(N)}-a_i^2\leq -b_2(N),$$ and we
can get a Spin$^c$ structure $\frak{s}_N$ with
$$c_1(\frak{s}_N)\equiv\sum_i \alpha_i\ \ \ \ \textrm{modulo
torsion}$$ by tensoring any $\frak{s}$ with a line bundle $L$
satisfying  $$2c_1(L)+c_1(\frak{s})\equiv\sum_i \alpha_i\ \ \ \
\textrm{modulo torsion},$$ completing the proof.
\end{proof}

\begin{thm}\label{Nexam}
Let $X$ be one of $$S^4,\ \  \overline{\Bbb CP}_2,\ \ S^1\times
(L_1\#\cdots\#L_n),\ \ \textrm{and}\ \ \widehat{S^1\times L}$$
where  each $L_i$ and $L$ are quotients of $S^3$ by free actions
of finite groups, and $\widehat{S^1\times L}$ is the manifold
obtained from the surgery on $S^1\times L$ along an $S^1\times
\{pt\}$.

Then for any integer $l\geq 0$ and any smooth
closed oriented 4-manifold $Z$ with $b_2^+(Z)=0$  admitting a
metric of positive scalar curvature, $$X\ \#\ kl Z$$  satisfies the properties of $N$ with $G=\Bbb Z_k$ in
Theorem \ref{firstth}, where the Spin$^c$ structure of $X \# kl Z$ is given by gluing any Spin$^c$ structure $\frak{s}_X$ on $X$ and any Spin$^c$ structure $\frak{s}_Z$ on $Z$ satisfying $c_1^2(\frak{s}_X)=-b_2(X)$ and $c_1^2(\frak{s}_Z)=-b_2(Z)$ respectively.
\end{thm}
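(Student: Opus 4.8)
The plan is to verify, for each of the four model manifolds $X$ and for $N:=X\,\#\,klZ$ equipped with $G=\Bbb Z_k$, the four hypotheses that Theorem \ref{firstth} imposes on $N$: that $b_2^+(N)=0$; that $\Bbb Z_k$ acts effectively and either freely or with a fixed point; that $N$ carries a $\Bbb Z_k$-invariant metric of positive scalar curvature; and that $N$ carries a $\Bbb Z_k$-equivariant Spin$^c$ structure $\frak{s}_N$ with $c_1^2(\frak{s}_N)=-b_2(N)$. The unifying device is a smooth effective $S^1$-action on each $X$, whose restriction to $\Bbb Z_k\subset S^1$ supplies the group action. The connected sum with $kl$ copies of $Z$ is then performed $\Bbb Z_k$-equivariantly by choosing $l$ free $\Bbb Z_k$-orbits in $X$ (totalling $kl$ points), gluing one copy of $(Z,\frak{s}_Z)$ at each point of these orbits, and letting $\Bbb Z_k$ cyclically permute the $k$ summands attached along each orbit. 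Since all copies of $Z$ are identified with the same $(Z,\frak{s}_Z)$, this permutation is realized by genuine diffeomorphisms and the glued data descend as required.

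First I would record the circle actions and their invariant positive-scalar-curvature metrics. On $S^4$ and on $\overline{\Bbb CP}_2$ one takes a linear rotation, respectively the standard holomorphic circle action, each preserving the round, respectively Fubini--Study, metric and each fixing a point. On $S^1\times(L_1\#\cdots\#L_n)$ one lets $S^1$ act freely on the first factor; since each $L_i=S^3/\Gamma_i$ carries an invariant round metric, the Gromov--Lawson construction \cite{GL,sung1} yields positive scalar curvature on $L_1\#\cdots\#L_n$, so the product metric is $S^1$-invariant of positive scalar curvature. Finally, $\widehat{S^1\times L}$ inherits the action obtained by extending the first-factor rotation across the surgered region, letting $S^1$ rotate the $D^2$-factor and fix the core $S^2$; as the surgery is along the orbit $S^1\times\{pt\}$, a codimension-three Gromov--Lawson surgery keeps the scalar curvature positive and the action isometric, with a fixed $2$-sphere. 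In every case $\Bbb Z_k\subset S^1$ still acts effectively, its orbit structure being free for the product and having a fixed point in the other three.

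Next I would produce the $S^1$-equivariant Spin$^c$ structures with $c_1^2(\frak{s}_X)=-b_2(X)$. Since $S^4$ and $S^1\times(L_1\#\cdots\#L_n)$ are spin with $b_2=0$, Lemma \ref{jacob} lifts the circle action to their trivial Spin$^c$ bundles, for which $c_1=0$ and hence $c_1^2=0=-b_2$. For $\overline{\Bbb CP}_2$, whose Spin$^c$ structures satisfy $c_1=(2m+1)\bar H$ with $\bar H^2=-1$, the choice $c_1=\bar H$ gives $c_1^2=-1=-b_2$; this structure is the one coming from the complex structure tensored with an equivariant power of the tautological bundle, all $S^1$-equivariant because the action is holomorphic. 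For $\widehat{S^1\times L}$ a Mayer--Vietoris computation gives $b_2=0$ (the boundary sphere of $L\smallsetminus\mathring D^3$ is null-homologous, killing the only candidate class), so every Spin$^c$ structure automatically has $c_1^2=0=-b_2$; an equivariant lift is supplied by Lemma \ref{jacob} together with Lemma \ref{joseph}, the latter making any twist by a torsion, hence flat, line bundle equivariant after checking that the circle action lifts to the finite universal cover. This surgered case is the main obstacle, since one must track the surgery framing and the lift of the action across the fixed $S^2$; the remaining three are immediate consequences of the two lemmas.

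It remains to assemble $N=X\,\#\,klZ$ and read off its invariants. The equivariant gluing produces a $\Bbb Z_k$-invariant positive-scalar-curvature metric, the pieces being joined along round cylinders by Gromov--Lawson, and a $\Bbb Z_k$-equivariant Spin$^c$ structure $\frak{s}_N$ built from $\frak{s}_X$ and the permuted copies of $\frak{s}_Z$. Additivity of the second Betti number and of the intersection form under connected sum gives $b_2^+(N)=b_2^+(X)+kl\,b_2^+(Z)=0$ and
$$c_1^2(\frak{s}_N)=c_1^2(\frak{s}_X)+kl\,c_1^2(\frak{s}_Z)=-b_2(X)-kl\,b_2(Z)=-b_2(N),$$
using $c_1^2(\frak{s}_X)=-b_2(X)$ and $c_1^2(\frak{s}_Z)=-b_2(Z)$. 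Because the copies of $Z$ are attached along free orbits, away from any fixed locus, the $\Bbb Z_k$-action on $N$ remains effective and is free exactly when the action on $X$ is free, and otherwise retains the fixed point of $X$. This verifies the final hypothesis and completes the reduction to Theorem \ref{firstth}.
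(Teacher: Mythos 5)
Your proposal is correct and takes essentially the same route as the paper: the same circle actions on the four model manifolds restricted to $\Bbb Z_k\subset S^1$, Gromov--Lawson surgeries producing the invariant positive-scalar-curvature metrics, Lemmas \ref{jacob} and \ref{joseph} for the equivariant Spin$^c$ lifts (including the spin-plus-flat-torsion-twist argument on $\widehat{S^1\times L}$), and the equivariant connected sum along free orbits with the additivity computation $c_1^2(\frak{s}_N)=-b_2(N)$. The only cosmetic deviations are your holomorphic-line-bundle phrasing of equivariance on $\overline{\Bbb CP}_2$ (the paper instead extends the action to $S^5$ and applies Lemma \ref{jacob}, which uniformly covers both structures with $c_1=\pm \bar H$) and your Mayer--Vietoris computation of $b_2(\widehat{S^1\times L})=0$ in place of the paper's Euler-characteristic argument.
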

\begin{proof}
First, we will define  $\Bbb Z_k$ actions  preserving a metric of positive scalar curvature.
In fact, our actions on $X$ will be induced from such $S^1$ actions.

For $X=S^4$, one can take a $\Bbb Z_k$-action coming from a nontrivial action of $S^1\subset SO(5)$ preserving a round metric. In this case, one can choose a free action or an action with fixed points also.

If $X=\overline{\Bbb CP}_2$, then one can use the following actions for some integers $m_1, m_2$ :
\begin{eqnarray}\label{exam}
j\cdot [z_0,z_1,z_2]=[z_0,e^{\frac{2jm_1}{k}\pi i}z_1,e^{\frac{2jm_2}{k}\pi i}z_2]
\end{eqnarray}
for $j\in \Bbb Z_k$, which preserve the Fubini-Study metric and has at least 3 fixed points $[1,0,0], [0,1,0], [0,0,1]$.

Before considering the next example, recall that every finite
group acting freely on $S^3$ is in fact conjugate to a subgroup of
$SO(4)$, and hence its quotient 3-manifold admits a metric of
constant positive curvature. This follows from the well-known
result of G. Perelman. (See \cite{morgan-tian1, morgan-tian2}.)

In $S^1\times (L_1\#\cdots\#L_n)$, the action is defined as a rotation along the $S^1$-factor, which is
obviously free and preserves a product metric. By endowing $L_1\#\cdots\#L_n$  with a metric of positive scalar curvature via the Gromov-Lawson surgery \cite{GL}, $S^1\times (L_1\#\cdots\#L_n)$ has a desired metric.

Finally the above-mentioned $S^1$ action on
$S^1\times L$ can be naturally extended to $\widehat{S^1\times
L}$, and moreover the Gromov-Lawson surgery \cite{GL} on
$S^1\times\{pt\}$ produces an $S^1$-invariant metric of positive
scalar curvature. Its fixed point set is $\{0\}\times S^2$ in the attached $D^2\times S^2$.

Now $X\# kl Z$ has an obvious $\Bbb Z_k$-action induced
from that of $X$ and a $\Bbb Z_k$-invariant metric which has positive
scalar curvature again by the Gromov-Lawson surgery.

It remains to prove that the above $\Bbb Z_k$-action on $X \# kl Z$ can be lifted to the Spin$^c$ structure obtained by gluing the above $\frak{s}_X$  and  $\frak{s}_Z$.
For this, we will only prove that any such $\frak{s}_X$  is $\Bbb Z_k$-equivariant. Then one can glue $k$ copies of $lZ$ in an obvious $\Bbb Z_k$-equivariant way. Recalling that the $\Bbb Z_k$ action on $X$ actually comes from an $S^1$ action, we will actually show the $S^1$-equivariance of  $\frak{s}_X$ on $X$.

On $S^4$,  the unique Spin$^c$ structure is trivial.  Any smooth $S^1$ action on $S^4$ which is spin can be lifted its trivial Spin$^c$ structure by Lemma \ref{jacob}.

Any smooth $S^1$ action on $\overline{\Bbb CP}_2$ is uniquely lifted to its orthonormal frame bundle $F$, and any  Spin$^c$ structure on $\overline{\Bbb CP}_2$
satisfying $c_1^2=-1$ is the double cover $P_1$ and $P_2$  of $F\oplus P$ and $F\oplus P^*$ respectively, where $P$ is the principal $S^1$ bundle over $\overline{\Bbb CP}_2$ with $c_1(P)=[H]$ and $P^*$ is its dual. Note that there is a base-preserving diffeomorphism between $P$ and $P^*$ whose total space is $S^5$. Obviously the action
(\ref{exam}) is extended to $S^5\subset \Bbb C^3$ commuting with the principal $S^1$ action of the Hopf fibration. By Lemma \ref{jacob} the $S^1$-action can be lifted to  $P_i\otimes_{S^1} S^1$ in an $S^1$-equivariant way, which is isomorphic to $P_i$ for $i=1,2$.

In case of $S^1\times (L_1\#\cdots\#L_n)$, any Spin$^c$ structure is
the pull-back from $L_1\#\cdots\#L_n$, and satisfies $c_1^2=0=-b_2$. Because the
tangent bundle is trivial, a free $S^1$-action is obviously
defined on its trivial spin bundle. Then the action can be
obviously extended to any Spin$^c$ structure, because it is
pulled-back from $L_1\#\cdots\#L_n$.

\begin{lem}
$\widehat{S^1\times L}$ is a rational homology 4-sphere, and  $$H^2(\widehat{S^1\times L};\Bbb Z)=H_1(L;\Bbb Z).$$ Its universal cover is $(|\pi_1(L)|-1)S^2\times S^2$ where $0(S^2\times S^2)$ means $S^4$.
\end{lem}
\begin{proof}
Since the Euler characteristic is easily computed to be 2 from the
surgery description,  and $b_1(\widehat{S^1\times L})=b_1(L)=0$,
it follows that $\widehat{S^1\times L}$ is a rational homology
4-sphere.

By the universal coefficient theorem,
\begin{eqnarray*}
H^2(\widehat{S^1\times L};\Bbb Z)
&=&\textrm{Hom}(H_2(\widehat{S^1\times L};\Bbb Z),\Bbb Z)\oplus \textrm{Ext}(H_1(\widehat{S^1\times L};\Bbb
Z),\Bbb Z)\\ &=& H_1(\widehat{S^1\times L};\Bbb Z)\\ &=& H_1(L;\Bbb Z).
\end{eqnarray*}

The universal cover is equal to the manifold obtained from $S^1\times S^3$ by performing surgery along  $S^1\times \{ |\pi_1(L)|\ \textrm{points in } S^3\}$, and hence it must be $(|\pi_1(L)|-1)S^2\times S^2$.
\end{proof}

By the above lemma, there are $|H_1(L;\Bbb Z)|$ Spin$^c$
structures on $\widehat{S^1\times L}$, all of which are torsion to satisfy
$c_1^2=0=-b_2(\widehat{S^1\times L})$. Since any $S^1$ bundle on  $\widehat{S^1\times L}$ is flat, and the $S^1$-action on  $\widehat{S^1\times L}$ can be obviously lifted to its universal cover, Lemma \ref{joseph} says that any $S^1$ bundle is $S^1$-equivariant under the $S^1$ action.

By the construction, $\widehat{S^1\times L}$ is spin, and hence the trivial Spin$^c$ bundle is $S^1$-equivariant by Lemma \ref{jacob}.
Any other Spin$^c$ structure is given by the tensor product over $S^1$ of the trivial Spin$^c$ bundle and an $S^1$ bundle, both of which are $S^1$-equivariant bundles. Therefore any Spin$^c$ bundle of $\widehat{S^1\times L}$ is  $S^1$-equivariant.

This completes all the proof.
\end{proof}


\bigskip

\end{document}